\documentclass[a4paper,reqno]{amsart}
\usepackage{amssymb}
\usepackage{latexsym}
\usepackage{amsmath}
\usepackage{euscript,url}
\usepackage{graphics,color}
\usepackage[all]{xy}
\usepackage{mathrsfs}
\usepackage[margin=3cm]{geometry}
\usepackage{MnSymbol} 

   \def\sH{{\mathfrak H}}

      \def\sR{{\mathfrak R}}
      
\def\sV{{\mathfrak V}}

\def\ff{{\mathfrak{f}}}

\def\sV{{\mathscr{V}}}

      \def\cU{{\mathcal U}}
\def\cV{{\mathcal V}}   \def\cW{{\mathcal W}}   \def\cX{{\mathcal X}}
   \def\cZ{{\mathcal Z}}

\newcommand{\be}{\begin{equation}}
\newcommand{\ee}{\end{equation}}
\newcommand{\ba}{\begin{eqnarray}}
\newcommand{\ea}{\end{eqnarray}}
\newcommand{\baa}{\begin{eqnarray*}}
\newcommand{\eaa}{\end{eqnarray*}}
\newcommand{\bb}{\begin{thebibliography}}
\newcommand{\eb}{\end{thebibliography}}

\newcounter{my}
\newcommand{\he}%
   {\stepcounter{equation}\setcounter{my}%
   {\value{equation}}\setcounter{equation}0%
   }%
\newcommand{\she}%
   {\setcounter{equation}{\value{my}}%
    }%

\newtheorem{pr}{Proposition}
\newtheorem{cor}{Corollary}

\newtheorem{theorem}{Theorem}[section]

\newtheorem{definition}[theorem]{Definition}
\theoremstyle{definition}

\newtheorem{remark}[theorem]{Remark}

\numberwithin{equation}{section}

\newcommand{\bra}[1]{\langle\,{#1}}
\newcommand{\ket}[1]{\mid{#1}\,\rangle}

\newcommand{\hg}[2]{\,\mbox{}_{#1}F_{ #2}}

\newcommand{\argu}[3]{\left(\!\!\begin{array}{c} #1\\#2\end{array} \!\!\Bigg\vert \,#3\right)}

\title{Meta Algebras and Special Functions: the Racah Case}
\author[N.~Crampé, Q.~Labriet, L.~Morey, S.~Tsujimoto, L.~Vinet, A.~Zhedanov]{Nicolas Crampé, Quentin Labriet, Lucia Morey, \\Satoshi Tsujimoto, Luc Vinet, Alexei Zhedanov }
\date{February 2026}

\begin{document}

\maketitle

\begin{abstract}
Finite families of biorthogonal rational functions and  orthogonal polynomials of Racah-type are studied within a unified algebraic framework based on the meta Racah algebra and its finite-dimensional representations. These functions are identified as overlap coefficients between eigensolutions of generalized and standard eigenvalue problems posited on the representation space. The approach naturally yields their orthogonality relations and bispectral properties.
\end{abstract}

\section{Introduction}

Orthogonal polynomial families belonging to the Askey scheme \cite{Koekoek} possess a dual spectral structure: they obey a three-term recurrence relation and simultaneously satisfy a differential or difference equation, both interpretable as eigenvalue problems in either the variable or degree picture, for suitable linear operators. These two operators generate the so called Askey–Wilson algebra, or one of its limiting or specialized forms \cite{zhedanov1991hidden}. From this perspective, the orthogonal polynomials are realized as overlap coefficients between the eigenbases of the corresponding two algebra generators within a given irreducible representation.   In the case of finite-dimensional representations, this set of operators constitutes a Leonard pair \cite{TERWILLIGER2001}, a structure that is in one-to-one correspondence with the terminating families in the Askey scheme.

This work advances a long-term research program devoted to a unified algebraic treatment of biorthogonal rational functions.  An algebraic interpretation\footnote{Let us point out that an alternative algebraic approach to rational functions was proposed in \cite{GW2025}, showing that (multivariate) rational functions of $q$-Racah type arise as overlap coefficients in representations of $\mathcal{U}_q(\mathfrak{sl}_2)$.} of these rational functions has already been initiated for those of type Hahn \cite{tsujimoto2021algebraic,Vinet-unified} and $q$-Hahn \cite{bussiere2022bispectrality}, and more recently for the Wilson rational functions \cite{CTVZ25}. These developments led to the introduction of the meta ($q$)-Hahn algebra, or meta Wilson algebra. The finite-dimensional representation theory of the meta ($q$)-Hahn algebra has been provided in \cite{tsujimoto2024meta,bernard2024meta}, where algebraic proofs of the relations satisfied by the corresponding rational functions were obtained.


In the present paper, we turn to the Racah case and investigate terminating ${}_4F_3$  hypergeometric rational functions associated with this setting. Our starting point is the abstract meta Racah algebra, and the rational functions are shown to emerge as overlap coefficients between bases related to GEVPs in its representations.

The structure of the paper is as follows. In Section \ref{sec:metaR}, we define the meta Racah algebra and discuss its connections with several related algebras, including the Hahn, meta Hahn, meta 
$q$-Racah, and Racah algebras. Section \ref{sec:representations} presents finite-dimensional bidiagonal representations of the generators of the meta Racah algebra. In Section \ref{sec:eigenbases}, several GEVPs and EVPs of interest are solved using these bidiagonal representations. Section \ref{sec:representations-various-bases} provides the representations of the meta Racah algebra on various bases, and establishes the connection with the Leonard trio structure introduced recently in \cite{Trio2026} by some of us as a generalization of the notion of Leonard pair. The Racah polynomials are explicitly identified in Section \ref{sec:racah-pols} within the overlap coefficients between two EVP bases; their orthogonality relations and bispectral properties are naturally recovered from the properties of the eigenbases and the representations of the meta Racah algebra on these bases. In Section \ref{sec:racah-rational}, rational functions of Racah type are identified within the overlap coefficients between GEVP and EVP bases, and their biorthogonality relations as well as generalized bispectral properties are derived algebraically using the representations of the meta Racah algebra. A differential model of this algebra is given in Section \ref{sec:models} and shown to yield a unit-circle  contour integral representation of the rational functions of Racah type analogous to the one of the polynomials.

\section{The Meta Racah Algebra \label{sec:metaR}}

The starting point for the algebraic interpretation of the biorthogonal rational functions of Racah type is the meta Racah algebra defined below. Throughout this paper, we use the notations 
$[A,B]=AB-BA$ and $\{A,B\}=AB+BA$  for the commutator and the anti-commutator, respectively.
\begin{definition}\label{def:mR}
    The meta Racah algebra $m\sR$ is the associative algebra with unit $I$ and generators $X$, $V$, $Z$
obeying the defining relations
\begin{align}
    [Z,X] &= Z^2 +X\,, \label{mRA1}\\
    [X,V] &= \{V,Z\}+2\zeta X +2\zeta^2 Z + \xi I\,, \label{mRA2}\\
    [V,Z] &= V +2 X +2\zeta Z+ \eta I\,, \label{mRA3}
\end{align}
with $\eta$, $\zeta$ and $\xi$ central elements. 
\end{definition}
 The meta Racah algebra contains a Casimir element given by
\begin{align}\label{eq:Cas}
  C=2ZVZ+\{X,V\}+2\zeta \{X,Z\}+2X^2+2\zeta^2 Z^2+ 2\eta X+V+2\xi Z\,,
\end{align}
which commutes with the generators $ X$, $Z$ and $ V$.
\begin{remark}
Let us define the following generators:
\begin{align}\label{eq:isob}
\overline{Z}=Z-\frac{\zeta}{2}\,,\quad\overline{X}=X+\zeta Z-\frac{\zeta^2}{4}\,,\quad\overline{V}=V,
\end{align}
which satisfy the following commutation relations
\begin{align}
    [\overline{Z},\overline{X}] &= \overline{Z}^2 +\overline{X}, \label{mRA1b}\\
    [\overline{X},\overline{V}] &= \{\overline{V},\overline{Z}\} + \overline{\xi} I, \label{mRA2b}\\
    [\overline{V},\overline{Z}] &= \overline{V} +2 \overline{X}+ \overline{\eta} I, \label{mRA3b}
\end{align}
where $\overline{\xi}=\xi-\eta\zeta$ and $\overline{\eta}=\eta+\frac{1}{2}\zeta^2$. 
The above presentation of the meta Racah algebra is equivalent to that given in Definition~\ref{def:mR} and provides a more convenient framework for establishing connections with other algebras, as illustrated in the following paragraphs. We emphasize, however, that the presentation given in Definition~\ref{def:mR} is the one that is most relevant for the study of special functions.  Although this presentation of the meta Racah algebra is equivalent to that given in Definition~\ref{def:mR}, $\zeta$ disappears from the equations, and for this reason one cannot properly recover all the parameters of the special functions.
\end{remark}
Next we discuss important connections between the meta Racah algebra and some related algebras.
\paragraph{\textbf{Hahn algebra.}}
Let us remark that $\overline{X}$ can be expressed in terms of $\overline{V}$ and $\overline{Z}$ using relation \eqref{mRA3b}. Substituting this expression, one obtains an equivalent set of defining relations for the meta Racah algebra involving only  $\overline{V}$ and $\overline{Z}$:
\begin{align}
\label{eq:metaracah-hahn}
  &  [[\overline{V},\overline{Z}],\overline{V}]=2\{\overline{V},\overline{Z}\} +2\overline \xi I\,,\\
  \label{eq:metaracah-hahn2}
  &  [\overline Z,[\overline V,\overline Z]]=2\overline Z^2-\overline V-\overline \eta I\,.
\end{align}
These relations coincide with the defining relations of the Hahn algebra. Indeed, upon identifying $K_1=-2\overline{Z}$ and $K_2=-\overline{V}$, one recovers the definition of the Hahn algebra given in equation~(1.3) of~\cite{FGVVZ}. Hence, the Hahn algebra through its connection with the meta Racah algebra will lead to interpretation of rational functions of \textit{Racah} type. 
\noindent

\paragraph{\textbf{Meta Hahn algebra.}}
The meta Hahn algebra $m\sH$ was introduced and studied in \cite{tsujimoto2024meta}. We will show that it can be obtained as a contraction of $m\sR$. To this end, we define new generators by
\begin{align}
   & \overline{Z}=\frac{1}{\epsilon} \widetilde{Z}+\frac{1}{2\epsilon}\,,
    \qquad \overline{X}=\frac{1}{\epsilon} \widetilde{X}-\frac{1}{4\epsilon^2}\,,
    \qquad \overline{V}= \widetilde{V}\,,
  \qquad   \overline{\xi}=\frac{1}{\epsilon}\widetilde{\xi}\,,\qquad \overline{\eta}=\frac{1}{\epsilon}\widetilde{\eta}+\frac{1}{2\epsilon^2}\,.
\end{align}
These new generators satisfy the modified relations
\begin{align}
&   [ \widetilde{Z},\widetilde{X}]=\widetilde{Z}^2+\widetilde{Z}+\epsilon \widetilde{X}\,,\\
 &  [ \widetilde{X},\widetilde{V}]=\{\widetilde{V},\widetilde{Z}\} +\widetilde{V}+\widetilde{\xi}I\,,\\
  &  [ \widetilde{V},\widetilde{Z}]=\epsilon\widetilde{V}+2\widetilde{X}+\widetilde{\eta}I\,.
\end{align}
In the limit $\epsilon\to0$, one recovers the defining relations of the meta Hahn algebra. Let us mention that the $q$-deformation of the meta Hahn algebra has been studied in \cite{bernard2024meta}.\\
\noindent

\paragraph{\textbf{Meta $q$-Racah algebra.}}
The meta $q$-Racah algebra $m\sR_q$ has been defined in \cite{CTVZ25} as the natural algebra encoding the bispectral properties of the $q$-Racah rational functions. This algebra is generated by three elements $\cX$, $\cZ$, $\cV$ subject to the following defining relations:
\begin{align}
\label{eq:meta}
 &[\cZ,\cX]_q=  \ff_1\cZ+\ff_2\cX \,,\\
 &[\cX,\cV]_q= \ff_1 \cV+\ff_6 \cZ+ \ff_7 I\,,\\
  &[\cV,\cZ]_q=\ff_2 \cV+ \ff_4\cX+\ff_5 I\,,
\end{align}
where $[A,B]_q=(AB-qBA)/(1-q)$ is the $q$-commutator and the parameters entering in the definition of $m\sR_q$ depends on 4 parameters $a,b,d,e$ as follows
\begin{subequations}\label{eq:f}
\begin{align}
& \ff_1=-de\,,\quad \ff_2= -q/a \,,
\quad \ff_4=(1+q)a\,,\quad \ff_5= ade(1+q)-q(bed+e+d+1) \,,\\ &\ff_6=\frac{b}{a} (1+q)\,,\quad
\ff_7=\frac{qb}{a^2}(1+q)-\frac{q}{a}(bed+eb+bd+1) \;.
\end{align}
\end{subequations}    
Let define new generators by affine transformations
\begin{align}
  &  \cV=(1-q)^2\overline{\cV}-\frac{\ff_1\ff_2\ff_4}{\omega^2}I, \quad \cZ=(1-q)\omega \overline{\cZ}-\omega + \ff_2,\\
   & \cX=(1-q)^2\frac{\ff_1\ff_2}{\omega} \overline{\cX}-(1-q)\frac{\ff_1\ff_2}{\omega}\overline{\cZ}-\frac{\ff_1\ff_2}{\omega} + \ff_1\,,
\end{align}
and put $q=\exp(h)$, $b=\exp(h\overline{b})$, $c=\exp(h\overline{c})$, $d=\exp(h\overline{d})$, $e=\exp(h\overline{e})$, $\omega=-\exp(h((1+\overline{e}+\overline{d})/2-\overline{b}/4))$. We can show that $\overline{\cX}$, $\overline{\cZ}$, $\overline{\cV}$ in the limit $h\to 0$ satisfy the defining relations \eqref{mRA1b}-\eqref{mRA3b} of $m\sR$.\\

\noindent

\paragraph{\textbf{Racah algebra.}} The subalgebra of the meta Racah algebra generated by $X+\rho Z$ (with $\rho$ a free parameter), $V$ and its Casimir element $C$, defined by \eqref{eq:Cas}, is the Racah algebra \cite{GZ88}. Indeed, one can show that these generators satisfied the following relations, which are precisely the defining relations of the Racah algebra:
\begin{align}
  &  [V,[W,V]]=2\{W,V\}+2V^2 +2(\eta+\zeta(\zeta-\rho)) V+2(\rho\xi+\zeta(\zeta\eta-\xi-\eta\rho)) I\,,\\
  & [[W,V],W]=2\{W,V\}+2 W^2+2(\eta+\zeta(\zeta-\rho)) W+(1-\rho^2)V-C+\rho(\xi-\rho\eta)I\,.
\end{align}
The Racah algebra encodes the bispectrality of the eponymous polynomials: we show in the following that the overlap coefficients between the basis diagonalizing $W$ and the one
diagonalizing $V$ are given explicitly in terms of the Racah polynomials.\\

\noindent

\paragraph{\textbf{Borel subalgebra of $\mathfrak{sl}_2$.}}
The subalgebra of the meta Racah algebra generated by $X$ and $Z$ subject only to \eqref{mRA1} is isomorphic to a Borel subalgebra of the Lie algebra $\mathfrak{sl}_2$ (see \cite[Proposition 5.1]{Gaddis}). Indeed,  let define $E=X+Z^2$ and $H=Z$. They satisfy $[H,E]=E$ which is the defining relation of the Borel subalgebra of $\mathfrak{sl}_2$.
 Another way to say it is that it is isomorphic to the enveloping algebra of the non-abelian two-dimensional solvable Lie algebra.

\section{Two-diagonal Representation}
\label{sec:representations}

Let $N$ be a positive integer, and $\sV_N$ be a real vector space of dimension $N+1$. The real scalar product of
two vectors $\ket{v}$ and $\ket{w}$ in  $\sV_N$ will be denoted by $\bra{v}\ket{w}$. We denote the vectors of the so-called standard basis of $\sV_N$ by $\ket{n}$, for $ n=0,1,\ldots,N,$ such
that $\bra{m}\ket{n} = \delta_{m,n}$ for all $m, n = 0, 1, \dots, N$. 

The representations of the Hahn algebra  \eqref{eq:metaracah-hahn}-\eqref{eq:metaracah-hahn2}, in which the generators $\overline{Z}$ and $\overline{V}$ act in a bidiagonal fashion on the standard basis, are known \cite{terwilliger2008}
\begin{align}\label{eq:Zfb}
     \overline{Z}\ket{n} &= (n-\overline{\alpha}) \ket{n} +  \ket{n+1}\,,\qquad \overline{Z}\ket{N} = (N-\overline{\alpha}) \ket{N}\,, \\
    \label{eq:Xfb}
     \overline{V}\ket{n} &= (n-\overline{\beta} - 1)(\overline{\beta}-n )\ket{n}  +n (N+1-n)(n-1-2\overline{\alpha}-\overline{\beta}+N)\ket{n-1}\,,
\end{align}
where $\overline{\alpha},\overline{\beta}$ are two parameters related
to the algebra parameters $\overline{\eta}$ and $\overline{\xi}$ as follows:
\begin{align}
 &\overline{\xi}=(\overline{\beta}+1 )(\overline{\beta} - N)( N-2\overline{\alpha}),\quad \overline{\eta} = \frac12\big((2\overline{\alpha}-N)^2+(\overline{\beta}-N)^2+\overline{\beta}(\overline{\beta} + 2)\big).
\end{align}
This basis is referred to as the split basis in \cite{terwilliger2008}. 
Observe that, by a suitable renormalization of the vectors, one can always choose the coefficient in front $\ket{n+1}$ for the action of $\overline{Z}$. As a consequence of equation \eqref{mRA3b}, the action of $\overline{X}$ on the standard basis is given by 
\begin{align}
  \overline{X}\ket{n} &= -(n-\overline{\alpha})^2 \ket{n} - (n-\overline{\beta}) \ket{n+1},\hspace{0.7cm} \overline{X}\ket{N} = -(N-\overline{\alpha})^2 \ket{N}.
\end{align}
Using isomorphism \eqref{eq:isob}, the actions of $Z$, $V$ and $X$ can be deduced as
   \begin{align}\label{eq:Zf}
     {Z}\ket{n} &= (n-\alpha) \ket{n} +  \ket{n+1},\qquad {Z}\ket{N} = (N-\alpha) \ket{N}\,, \\
    \label{eq:Vf}
     {V}\ket{n} &= (n-\beta-\zeta-1)(\beta+\zeta-n)\ket{n}+n(N+1-n)(n-1-2\alpha-\beta-2\zeta+N)\ket{n-1}\,,\\
      \label{eq:Xf}
     {X}\ket{n} &= -(n-{\alpha})^2 \ket{n} - (n-{\beta}) \ket{n+1},\hspace{0.7cm} X\ket{N} = -(N-{\alpha})^2 \ket{N}.
\end{align} 
where $\alpha=\overline{\alpha}-\frac{\zeta}{2}$ and  $\beta=\overline{\beta}-\zeta$. 
This provides a representation of the defining relations \eqref{mRA1}-\eqref{mRA3} with
\begin{align}
  &  \xi=(\beta+1)(\beta-\zeta-N)(N-2\alpha)+2\alpha\zeta(\alpha+1)\,,\label{eq:xi2}\\
   & \eta=(N-\zeta)(N-2\alpha-\beta-\zeta)+(\beta+\zeta)(\beta+1)+2\alpha^2\,.\label{eq:eta2}
\end{align}
For convenience, we record the action of the transposed generators:
\begin{align}
    Z^{\top}\ket{n} &= (n-\alpha) \ket{n} +  \ket{n-1}\label{eq:ZT-standard}\,, \\
        V^{\top}\ket{n} &= (n-\beta-\zeta-1)(\beta+\zeta-n)\ket{n}+(n+1)(N-n)(n-2\alpha-\beta-2\zeta+N)\ket{n+1}\label{eq:VT-standard}\,,\\
          X^{\top}\ket{n} &= -(n-\alpha)^2 \ket{n} - (n-1-\beta) \ket{n-1}\label{eq:XT-standard}\,.
\end{align}
In what follows, the notion of algebraic Heun operator will be needed.
An algebraic Heun operator associated to a LP $(V,Z)$ is the linear combination
\begin{align}\label{eq:Heun}
    H=h_0 I +h_1 Z +h_2 V +h_3 ZV + h_4 VZ\,,
\end{align}
 for $h_0,h_1,h_2,h_3,h_4\in \mathbb{C}$. It has been demonstrated in \cite{NT} that any endomorphism of a finite dimensional vector space which is tridiagonal in both bases associated to the LP $(X,Y)$ can be written as \eqref{eq:Heun}. The name appears for the first time in \cite{algebraicheun-vinet}, in the study of the band and time limiting problem, as a generalization of the differential Heun operator. It has also been used to simplify the computation of the entanglement entropy \cite{CNV19} and appears in the study of quantum integrable systems \cite{BP19}. We emphasize that the operator $ X$ defined by \eqref{mRA3}  is a particular algebraic Heun operator associated to the Leonard pair $( V, Z)$. More precisely, it is the most general algebraic Heun operator associated to the Leonard pair $(V,Z)$ which acts bidiagonally as $ Z$. This result is deduced from the following proposition.
\begin{pr}
The most general algebraic Heun operator associated to the Leonard pair $( V,  Z)$ acting bidiagonally as $Z$ is \begin{equation}\label{eq:Heunbi}
 H=h_0 I+ h_1  Z-h_4 V+h_4 [ V, Z],
\end{equation}
for $h_0,\, h_1,\, h_4 \in \mathbb{C}$.
\end{pr}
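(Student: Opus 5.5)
We work in the standard basis and use the explicit actions \eqref{eq:Zf} and \eqref{eq:Vf}. In this basis $Z$ is lower bidiagonal: it is diagonal plus the shift $\ket{n}\mapsto\ket{n+1}$. $V$ is upper bidiagonal: it is diagonal plus a component $\ket{n}\mapsto\ket{n-1}$. We write a general algebraic Heun operator \eqref{eq:Heun} as $H=h_0I+h_1Z+h_2V+h_3ZV+h_4VZ$ and compute the action of each term on $\ket{n}$. The products $ZV$ and $VZ$ are tridiagonal, with components along $\ket{n-1}$, $\ket{n}$ and $\ket{n+1}$. Saying that $H$ ``acts bidiagonally as $Z$'' means that $H\ket{n}$ has no component along $\ket{n-1}$. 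So the whole proof reduces to requiring that the coefficient of $\ket{n-1}$ in $H\ket{n}$ vanishes for all $n=1,\dots,N$.

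To extract this coefficient, let $v_n=n(N+1-n)(n-1-2\alpha-\beta-2\zeta+N)$ denote the off-diagonal entry of $V$, so that $V\ket{n}=\theta_n\ket{n}+v_n\ket{n-1}$. Let $z_n=n-\alpha$ be the diagonal entry of $Z$. The coefficient of $\ket{n-1}$ then comes only from three terms:
\begin{itemize}
\item $h_2V$ contributes $h_2v_n$;
\item $h_3ZV$ contributes $h_3v_nz_{n-1}$, since $Z$ acting on $v_n\ket{n-1}$ gives $v_nz_{n-1}\ket{n-1}$ plus a $\ket{n}$ term;
\item $h_4VZ$ contributes $h_4z_nv_n$, since $VZ\ket{n}=z_nV\ket{n}+V\ket{n+1}$ and $V\ket{n+1}$ has no $\ket{n-1}$ component.
\end{itemize}
The condition is therefore
\[
v_n\bigl(h_2+h_3(n-1-\alpha)+h_4(n-\alpha)\bigr)=0,\qquad n=1,\dots,N .
\]
For generic parameters we have $v_n\neq0$, which is exactly the irreducibility that makes $(V,Z)$ a Leonard pair. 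The bracket is affine in $n$. If $N\ge 2$, it vanishes at two distinct points, so its coefficients must vanish:
\[
h_3+h_4=0,\qquad h_2-h_3(1+\alpha)-h_4\alpha=0 .
\]
These give $h_3=-h_4$ and $h_2=-h_4$. Substituting back,
\[
H=h_0I+h_1Z-h_4V-h_4ZV+h_4VZ=h_0I+h_1Z-h_4V+h_4[V,Z],
\]
which is \eqref{eq:Heunbi}.

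For the converse, \eqref{mRA3} gives $[V,Z]=V+2X+2\zeta Z+\eta I$. Hence \eqref{eq:Heunbi} equals $h_0'I+h_1'Z+2h_4X$. This operator is bidiagonal of the same shape as $Z$ by \eqref{eq:Xf}, which confirms that every operator of the form \eqref{eq:Heunbi} has the required property. It also shows that $X$ is the instance $h_4=\tfrac12$.

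The main obstacle is bookkeeping the $\ket{n-1}$ component of $ZV$ and $VZ$ correctly, together with a caveat on small $N$. When $N=1$ there is only one condition, so the argument as stated needs $N\ge2$. For $N=1$ the statement should be read as holding in the sense of the abstract relation, or one adds the assumption $N\ge2$. I would note this explicitly, together with the genericity assumption $v_n\neq0$.
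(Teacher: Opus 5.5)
Your proposal is correct and follows essentially the same route as the paper: you expand $H$ on the standard basis, isolate the coefficient $v_n\,(h_2+h_3(n-1-\alpha)+h_4(n-\alpha))$ of $\ket{n-1}$, and use $v_n\neq 0$ together with its affine dependence on $n$ to force $h_2=h_3=-h_4$. Your converse check via \eqref{mRA3}, which exhibits $X$ as an instance of \eqref{eq:Heunbi}, and your remark that $N\ge 2$ is needed to separate the two coefficient conditions are sound refinements of points the paper leaves implicit.
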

\begin{proof}
For simplicity, let us denote the bidiagonal representations \eqref{eq:Zfb}, \eqref{eq:Xfb} of $ Z$ and $ V$ by
\begin{align}
 Z \ket{n} &= \alpha_n \ket{n} + \ket{n+1}, \\
 V \ket{n} &= \beta_n \ket{n} + \gamma_n \ket{n-1},
\end{align}
and consider the algebraic Heun operator $H$ associated with the Leonard pair $( V,  Z)$ defined in \eqref{eq:Heun} 
The action of $H$ on the standard basis $\ket{n}$ is given by
\begin{align}
 H\ket{n}&=(h_1+h_3\beta_n+h_4\beta_{n+1})\ket{n+1}+(h_0+h_1\alpha_n+h_2\beta_n+h_3\beta_n\alpha_n+h_3\gamma_n+h_4\alpha_n\beta_n+h_4\gamma_{n+1})\ket{n}\nonumber\\
&+\gamma_n(h_2+h_3\alpha_{n-1}+h_4\alpha_n)\ket{n-1}\,.
\end{align}
From this expression, and since $\gamma_n\neq 0$, we see that $H$ acts in a bidiagonal fashion (as $ Z$ does) if and only if
\begin{equation}
h_2+h_3\alpha_{n-1}+h_4\alpha_n =0\,.
\end{equation}
Using $\alpha_n=n- \alpha$, we get $h_3=h_2=-h_4$. Thus, $H$ takes the form \eqref{eq:Heunbi} which concludes the proof of the proposition.
\end{proof}

\section{(Generalized) Eigenbases}
\label{sec:eigenbases}
Finite-dimensional representations of the meta Racah algebra have been obtained in the previous section.
In order to study biorthogonal rational functions some GEVP appears naturally. Hence it is important to consider GEVP bases associated with the elements \( X \) and \( Z \).
We also consider EVP bases associated with the elements \( X + \rho Z \) and \( V \). In the forthcoming sections, the orthogonal polynomials and bispectral rational functions will then  be identified explicitly as overlap coefficients between different (generalized) eigenbases, and their properties are derived using the actions of the generators in these various bases. We provide these bases below and additionally, we include the EVP associated with \( Z \), which will be needed later for the connection with the approach using Leonard trios.

More precisely, the different (generalized) eigenbases useful in this paper are: 
\begin{itemize}
\item The vectors $\ket{d_n}$ and $\ket{d^*_n}$ are the eigenvectors of the GEVP
\begin{align}
  &  X \ket{d_n} = \lambda_n Z \ket{d_n}\,,\\
  &   X^{\top} \ket{d^*_n} = \lambda_n Z^{\top} \ket{d^*_n}\,,\label{eq:GEVPds}
\end{align}
where $\lambda_n =\alpha-n$.
\item 
The vectors $\ket{e_n}$ and $\ket{e^*_n}$  are the eigenvectors of the EVP
\begin{align}
 &   V \ket{e_n} = \mu_n \ket{e_n}\,,\label{eq:Ven}\\
 &   V^{\top} \ket{e_n^*} = \mu_n \ket{e_n^*}\,,
\end{align}
where $\mu_n =(n-\beta-\zeta-1)(\beta+\zeta-n)$.

\item The vectors $\ket{f_n}$ and $\ket{f^*_n}$ are the eigenvectors of the EVP 
\begin{align}
&(X+\rho Z) \ket{f_n} =  (n-\alpha-\rho)(\alpha-n) \ket{f_n}\,,\\
&(X^{\top}+\rho Z^{\top}) \ket{f_n^*} =  (n-\alpha-\rho)(\alpha-n)\ket{f_n^*}\,,
\end{align}
where $\rho$ is a real parameter. 
\item The vectors $\ket{z_n}$ and $\ket{z^*_n}$  are the eigenvectors of the EVP 
\begin{align}
&Z \ket{z_n} = (n-\alpha)\ket{z_n}\,,\\
&Z^{\top} \ket{z^*_n} =(n-\alpha)\ket{z^*_n}\,.
\end{align}
\end{itemize}
The explicit expressions of the eigenvalues in the previous relations are straightforwardly identified from the diagonal part of the actions of the generators on the standard basis. The normalizations of these vectors have been chosen such that the following orthogonality relations hold:
\begin{align}
    & \bra{e_m^*}\ket{e_n}=\bra{e_n}\ket{ e_m^*} =\delta_{n,m}\,,\label{eq:orth-e}\\
    &\bra{ f_m^*}\ket{f_n}=\bra{ f_n}\ket{f_m^*}=\delta_{n,m}\,,\label{eq:orth-f}\\
    &\bra{ z_m^*}\ket{z_n}=\bra{ z_n}\ket{z_m^*}=\delta_{n,m}\,,\label{eq:orth-z}\\
    & \bra{d_m^*}\,|\,Z\ket { d_n}=\bra{ d_m}\,|\,Z^\top\ket{d_n^*} = \delta_{n,m}\,,\label{eq:orth-d}
\end{align}
for $m,n = 0,1,\ldots,N$. From these previous orthogonality relations, one deduces the corresponding
completeness relations
\begin{align}
\sum_{n=0}^N\ket{e_n}\bra{e_n^*}\,\,|&=
\sum_{n=0}^N\ket{f_n}\bra{f_n^*}\,\,|=
\sum_{n=0}^N\ket{z_n}\bra{z_n^*}\,\,|=1\,,\label{eq:completeness}\\
&\sum_{n=0}^N Z\ket{d_n}\bra{d_n^*}\,\,|=1\,.\label{eq:completeness-d}
\end{align}
In the following, we study the overlap coefficients between these different bases. To obtain explicit expressions for these coefficients, we first express the corresponding vectors in the standard basis. The bidiagonal actions derived in the previous section play a crucial role in explicitly constructing these bases and determining the associated (generalized) eigenvalues. The following notation for multiple Pochhammer symbols 
\begin{equation}
(a_1,a_2,\ldots,a_k)_n=(a_1)_n(a_2)_n\ldots(a_k)_n,
\end{equation}
shall be used.

\begin{pr}
\label{pr:basis-in-standard}
The solutions to the GEVPs and EVPs of interest are given as follows in terms
of expansions over the standard basis $\{\ket{\ell}: \ell=0,\ldots,N \}:$
\begin{align}
\label{eq:expression-dn}
   \ket{ d_n} &= \frac{(n-N-\alpha+\beta+1)_{N-n}}{(n-N,\alpha-N)_{N-n}} \sum_{\ell=0}^{N}\frac{(n-N,\alpha-N)_{N-\ell}}{(n-N-\alpha+\beta+1)_{N-\ell}} \ket{\ell},\\
    \ket{d_n^*} &= \frac{(-n+\alpha-\beta)_n}{n!(-\alpha)_{n+1}} \sum_{\ell=0}^{N} (-1)^{\ell} \frac{(-n,-\alpha)_{\ell}}{
    (-n+\alpha-\beta)_{\ell}} \ket{\ell},\label{eq:expression-dnast}\\
   \ket{e_n}  &=\frac{(-N,N-2\alpha-\beta-2\zeta)_n}{(n-2\beta-2\zeta-1)_n} \sum_{\ell=0}^{N} (-1)^{\ell}\frac{(-n,n-2\beta-2\zeta-1)_\ell}{\ell!(-N,N-2\alpha-\beta-2\zeta)_\ell}\ket{\ell},\label{eq:expression-en} \\
   \ket{ e_n^*}  &=\frac{(-N,n+N-2\alpha-\beta-2\zeta)_{N-n}}{(2\beta+2\zeta-N-n+1)_{N-n}}\sum_{\ell=0}^{N} \frac{(n-N,2\beta+2\zeta-N-n+1)_{N-\ell}}{(N-\ell)!(-N,2\alpha+\beta+2\zeta-2N+1)_{N-\ell}}\ket{\ell},\label{eq:expression-en-ast} \\
    \ket{f_n}  &=\frac{(\beta+\rho-N+1)_{N-n}}{(n-N,2\alpha+\rho-N-n)_{N-n}} \sum_{\ell=0}^{N}
   \frac{(n-N,2\alpha+\rho-N-n)_{N-\ell}}{(\beta+\rho-N+1)_{N-\ell}} \ket{\ell},\label{eq:expression-fn}\\
   \ket{f_n^*}&=\frac{(-\beta-\rho)_{n}}{n!(n-2\alpha-\rho)_{n}}\sum_{\ell=0}^{N} (-1)^{\ell}
  \frac{(-n,n-2\alpha-\rho)_{\ell} }{(-\beta-\rho)_{\ell}} \ket{\ell},\label{eq:expression-fn-ast}\\
    \ket{z_n} &= \frac{1}{(n-N)_{N-n}}\sum_{\ell=0}^{N} (n-N)_{N-\ell}\ket{\ell},\label{eq:expression-zn}\\\label{eq:expression-zn-ast}
    \ket{z^*_n}&=\frac{1}{(-n)_n}\sum_{\ell=0}^N(-1)^{\ell+n}(-n)_\ell\ket{\ell}.
    \end{align}
\end{pr}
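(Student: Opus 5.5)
The plan is to verify each formula by reducing the corresponding (generalized) eigenvalue equation, written in the standard basis, to a two-term recurrence for the coefficients. This is possible because every operator involved is bidiagonal there. Write a candidate vector as $\ket{v}=\sum_\ell c_\ell\ket{\ell}$.

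For the non-transposed problems we use \eqref{eq:Zf}, \eqref{eq:Xf}, \eqref{eq:Vf}.
\begin{itemize}
\item $Z$ and $X$ (hence $X-\lambda Z$ and $X+\rho Z$) are lower bidiagonal. The coefficient of $\ket{\ell}$ in the eigen-equation then involves only $c_\ell$ and $c_{\ell-1}$.
\item $V$ is upper bidiagonal, so its eigen-equation couples $c_\ell$ and $c_{\ell+1}$.
\end{itemize}
For the transposed problems, \eqref{eq:ZT-standard}--\eqref{eq:XT-standard} give the opposite pattern. In each case the diagonal entries identify the eigenvalue, and the recurrence determines the vector up to a scalar.

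Concretely, for $\ket{d_n}$ the coefficient of $\ket{\ell}$ in $(X-\lambda_nZ)\ket{d_n}=0$ reads
\begin{equation*}
\bigl(-(\ell-\alpha)^2-\lambda_n(\ell-\alpha)\bigr)c_\ell-(\ell-1-\beta+\lambda_n)c_{\ell-1}=0,
\end{equation*}
with $\lambda_n=\alpha-n$. This simplifies to
\begin{equation*}
(\ell-\alpha)(n-\ell)\,c_\ell=(\ell-1-\beta+\alpha-n)\,c_{\ell-1}.
\end{equation*}
The factor $(n-\ell)$ forces $c_\ell=0$ for $\ell<n$, and the recurrence is solved for $\ell>n$ from the top. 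Rewriting the ratio $c_{\ell-1}/c_\ell$ in Pochhammer form in the variable $N-\ell$ reproduces \eqref{eq:expression-dn}; the factors $(n-N)_{N-\ell}$ vanish automatically for $\ell<n$.

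The same computation applies to the other vectors:
\begin{itemize}
\item $\ket{f_n}$: replace $\lambda_n$ by the eigenvalue of $X+\rho Z$. The diagonal difference factorizes as $(\ell-n)(\ell+n-2\alpha-\rho)$, giving \eqref{eq:expression-fn}.
\item $\ket{z_n}$: this is the degenerate case, with ratio $(\ell-n)$ only.
\item Starred vectors: transposition reverses the direction of the recursion. We get supports $\ell\le n$ and the ${}_{\ell}$-indexed Pochhammers in \eqref{eq:expression-dnast}, \eqref{eq:expression-fn-ast}, \eqref{eq:expression-zn-ast}.
\item $V$-problems: the diagonal difference $\mu_\ell-\mu_n$ factorizes as $(\ell-n)(\ell+n-2\beta-2\zeta-1)$ up to sign. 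The off-diagonal coefficients $\ell(N+1-\ell)(\ell-1-2\alpha-\beta-2\zeta+N)$ produce the factors $\ell!$ and $(-N,N-2\alpha-\beta-2\zeta)_\ell$, giving \eqref{eq:expression-en} and \eqref{eq:expression-en-ast}.
\end{itemize}

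The second step is to fix the overall prefactors from the normalizations \eqref{eq:orth-e}--\eqref{eq:orth-d}. Because the supports are triangular (e.g.\ $\ket{d_n}\in\mathrm{span}\{\ket{\ell}:\ell\ge n\}$ and $\ket{d_n^*}\in\mathrm{span}\{\ket{\ell}:\ell\le n\}$), orthogonality for $m\ne n$ is automatic from the (generalized) eigenvalue equations with distinct eigenvalues. Only the diagonal pairings need checking. We therefore do not compute the full sum: we only require the pairing to equal $1$.
\begin{itemize}
\item EVP cases: with the stated supports only the $\ell=n$ term of $\bra{e_n^*}\ket{e_n}$ survives, so the condition is that the product of the two $\ell=n$ coefficients equals $1$.
\item $\ket{d_n}$: we have $\bra{d_n^*}Z\ket{d_n}$, and $Z\ket{d_n}$ has components at $\ell\ge n$ only. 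Its $\ket{n}$-component is $(n-\alpha)c_n$, so the condition becomes $(n-\alpha)\,c_n\,c^*_n=1$.
\end{itemize}
One then checks that the displayed prefactors make the $\ell=n$ coefficients multiply correctly. For instance, $\ket{d_n}$ has coefficient $1$ at $\ell=n$, and $\ket{d_n^*}$ has coefficient
\begin{equation*}
(-1)^n\frac{(-n)_n}{n!\,(-\alpha)_{n+1}}=\frac{1}{(-\alpha)_{n+1}/(-\alpha)_n}=\frac{1}{n-\alpha}
\end{equation*}
at $\ell=n$, matching the condition.

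The main obstacle is not conceptual but bookkeeping. The $V$-recurrence and its transpose involve the three-factor off-diagonal term. Its $\ell$-dependence must be matched with reversed Pochhammer symbols such as $(2\alpha+\beta+2\zeta-2N+1)_{N-\ell}$, with signs tracked, and this requires care. I would handle it using the identity $(a)_{N-\ell}=(-1)^{N-\ell}(1-a-N+\ell)_{N-\ell}$-type reflections, and by checking the ratio $c_{\ell+1}/c_\ell$ directly rather than the closed form. One should also note that generic parameters are assumed, so that no denominator vanishes.
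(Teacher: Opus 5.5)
Your proposal is correct and follows essentially the same route as the paper: both project each (generalized) eigenvalue equation onto the standard basis, use the bidiagonal actions to reduce it to a two-term recurrence, and solve it. The paper fixes only $\langle n\ket{e_n}=\langle n\ket{d_n}=1$ and leaves the starred normalizations implicit, whereas your triangular-support check of the diagonal pairings makes them explicit. One harmless slip: in your $\ell=n$ coefficient of $\ket{d_n^*}$ the factor $(-\alpha)_n$ is dropped from the intermediate expression, but the final value $1/(n-\alpha)$ is correct.
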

\begin{proof}
Here is a sketch of the proof. 

(i) Consider to begin, the eigenvectors $\{\ket{e_n}\}_{n=0}^N$ of $V$ in the $N+1$-dimensional vector space $\sV_N$. From $V\ket{e_n}=\mu_n\ket{e_n}$, we have 
\begin{equation}
\bra{\ell}\,\,|(V-\mu_n)\ket{e_n}=0, \qquad (\ell=0,\ldots,N),
\end{equation}
which upon using the action of $V^{\top}$ on the elements $\bra{\ell}\,\,|$ amounts to
\begin{equation}
\label{eq:recurrence-en}
(\mu_\ell-\mu_n)\bra{\ell}\ket{e_n}+ (\ell+1)(N-\ell)(\ell-2\alpha-\beta-2\zeta+N)\bra{\ell+1}\ket{e_n}=0, \quad (\ell=0,\ldots,N),
\end{equation}
where we recall that $\mu_\ell=(\ell-\beta-\zeta-1)(\beta+\zeta-\ell).$ It is easy to solve \eqref{eq:recurrence-en} under the normalization condition $\bra{n}\ket{e_n}=1$ to find 
\begin{equation}
\bra{\ell}\ket{e_n}=(-1)^{n+\ell}\frac{n!(-n,n-2\beta-2\zeta-1)_\ell(-N,N-2\alpha-\beta-2\zeta)_n}{\ell!(-N,N-2\alpha-\beta-2\zeta)_\ell(-n,n-2\beta-2\zeta-1)_n},
\end{equation}
which leads to the expression \eqref{eq:expression-en} of $\ket{e_n}$ after some 
 simplifications.

(ii) The generalized eigenvectors $\{\ket{d_n}\}_{n=0}^N$ satisfy
\begin{equation}
\bra{N-\ell\,\,}|(X-\lambda_nZ)\ket{d_n}=0, \qquad (\ell=0,\ldots,N),
\end{equation}
which leads to 
\begin{equation}
\label{eq:recurrence-dn}
(-\lambda_{N-\ell}^2+\lambda_n\lambda_{N-\ell})\bra{N-\ell}\ket{d_n}+(-N+\ell+1+\beta-\lambda_n)\bra{N-\ell-1}\ket{d_n}=0, 
\end{equation}
where $\lambda_k=\alpha-k$. Solving \eqref{eq:recurrence-dn} under the normalization $\bra{n}\ket{d_n}=1$, yields
\begin{equation}
\bra{\ell}\ket{d_n}=\frac{(n-N,\alpha-N)_{N-\ell}(n-N-\alpha+\beta+1)_{N-n}}{(n-N,\alpha-N)_{N-n}(n-N-\alpha+\beta+1)_{N-\ell}}\,,
\end{equation} 
which proves the expression \eqref{eq:expression-dn} for $\ket{d_n}$.

(iii) Similarly, the eigenvectors $\ket{e_n^*}$ and $\ket{d_n^*}$ of the problems involving the transposed operators $V^{\top}, X^{\top}$ and $Z^{\top}$ are respectively obtained by solving 
\begin{equation}
\bra{N-\ell}\,\,|(V^{\top}-\mu_n)\ket{e_n^*}=0, \quad \bra{\ell}\,\,|(X^{\top}-\lambda_nZ^{\top})\ket{d_n^*}=0\,,
\end{equation}
which entails
\begin{align}
&(\mu_{N-\ell}-\mu_n)\bra{N-\ell}\ket{e_n^*}+(N-\ell)(\ell+1)(2N-\ell-2\alpha-\beta-2\zeta-1)\bra{N-\ell-1}\ket{e_n^*}=0\,,\\
&\lambda_\ell(\lambda_n-\lambda_\ell)\bra{\ell}\ket{d_n^*}+(\beta-\ell-\lambda_n)\bra{\ell+1}\ket{d_n^*}=0\,,
\end{align}
for $\ell=0,\ldots,N.$ 

(iv) One proceeds in the same fashion to obtain the expressions for $\ket{f_n}$, $\ket{f_n^*}$,  $\ket{z_n}$ and $\ket{z_n^*}$.
\end{proof}

\section{Representations of $m\sR$ on various bases}
\label{sec:representations-various-bases}
In this section, the actions of the generators of $m\sR$ on the vectors of the bases obtained in the previous section are provided.  These will prove useful for characterizing the
recurrence and difference properties of the special functions of Racah type. For an operator $O$ acting on
the vector space $\sV_N$, the notation $O^{(b)}_{m,n}$ will refer to its $m, n$ matrix entry in the basis $b \in\{d, d^\ast, e, e^\ast, f, f^\ast, z, z^*\}$. Note moreover that for $b \in \{e, f , z\}$, the resolutions of the identity \eqref{eq:completeness} imply that
\begin{equation}
\label{eq:rel-o}
O^{\top (b^*)}_{m,n}=O_{n,m}^{(b)}\,.
\end{equation}

\subsection{Representations in the $e$ and $e^*$ bases\label{sec:repe}} The basis $e$ is formed of the eigenvectors solving the eigenvalue equation $V\ket{e_n}=\mu_n\ket{e_n}$. As noted previously, the operators $Z$ and $V$ satisfy the Hahn algebra and therefore, $Z$ is tridiagonal in the basis constituted of the eigenvectors of $V$. Indeed, from the expression \eqref{eq:expression-en} of $\ket{e_n}$ and the action of $Z$ on the standard basis, we obtain, for $n=0,1,\dots,N$,
\begin{align}
 Z \ket{e_n} &= Z^{(e)}_{n+1,n} \ket{e_{n+1}} +Z^{(e)}_{n,n} \ket{e_{n}}+Z^{(e)}_{n-1,n} \ket{e_{n-1}},
\label{action:Zone}
\end{align}
where
\begin{align}
&Z^{(e)}_{n+1,n}=1\label{action:Zone:coe1}, \\
&Z^{(e)}_{n,n}=\frac{n(n-2\beta-2\zeta+N-1)(n+2\alpha-\beta-N-1)}{(2n-2\beta-2\zeta-2)(2n-2\beta-2\zeta-1)}\nonumber\\
&-\frac{(n-N)( n-2\beta  - 2\zeta - 1)(  n- 2\alpha-\beta- 2\zeta + N  )}{(2n-2\beta-2\zeta-1)(2n-2\beta-2\zeta)}-\alpha \,,\label{action:Zone:coe2}\\
&Z^{(e)}_{n-1,n}=n \left(N+1-n\right) \label{action:Zone:coe3}\\&\times\frac{\left(n+2\alpha -\beta-N-1\right)  \left(n-2\beta-2\zeta-2 \right) \left(n-2\beta-2\zeta +N-1\right) \left(n-2\alpha -\beta-2\zeta+N-1 \right)}{\left(2n-2\beta-2\zeta -3\right) \left( 2n-2\beta-2\zeta -2 \right)^{2} \left(2n-2\beta -2\zeta -1\right) }.\nonumber
\end{align}
In the previous relations, we have used the conventions that $\ket{e_{N+1}}=\ket{e_{-1}}=0$. 

As noted previously, the operator $X$ is an algebraic Heun operator for the Leonard pair $(V,Z)$. Therefore, $X$ is tridiagonal in the basis constituted of the eigenvectors of $V$. Indeed, from the expression \eqref{eq:expression-en} of $\ket{e_n}$ and the action of $X$ on the standard basis we obtain, for $n=0,1,\dots,N$,
\begin{align}
  X \ket{e_n} &=X^{(e)}_{n+1,n} \ket{e_{n+1}} +X^{(e)}_{n,n} \ket{e_{n}}+X^{(e)}_{n-1,n} \ket{e_{n-1}},
\label{action:Xone}
\end{align}
where
\begin{align}
&X^{(e)}_{n+1,n}= \beta-n\,,
\label{action:Xone:coe1}
\\ 
&X^{(e)}_{n,n}=\frac{n(n-2\beta-2\zeta+N-1)(n+2\alpha-\beta-N-1)(n-\beta-2\zeta-1)}{(2n-2\beta-2\zeta-2)(2n-2\beta-2\zeta-1)}\nonumber\\
&+\frac{(n-N)( n-2\beta  - 2\zeta - 1)(  n- 2\alpha-\beta- 2\zeta + N  )(n-\beta)}{(2n-2\beta-2\zeta-1)(2n-2\beta-2\zeta)}-\alpha^2, \label{action:Xone:coe2}\\
&X^{(e)}_{n-1,n}
=\left(n-\beta-2\zeta-1\right)Z_{n-1,n}^{(e)}.
\label{action:Xone:coe3}
\end{align}
In the previous relations, we have used the conventions that $\ket{e_{N+1}}=\ket{e_{-1}}=0$. 
The tridiagonal actions of the transposed operators $X^\top $ and $Z^\top $ on $\ket{e_n^*}$ are readily obtained from the formulas above.  

\subsection{Representations in the $f$ and $f^*$ bases\label{sec:repf}} 

The $f$ basis is made out of the eigenvectors of  $X + \rho Z$. As shown in Section \ref{sec:metaR}, $X + \rho Z$ and $V$ generate the Racah algebra. Therefore, in the representation basis where $X + \rho Z$ is diagonal, $V$ is tridiagonal. Indeed a straightforward computation gives
\begin{align}
 V \ket{f_n} &= V^{(f)}_{n+1,n} \ket{f_{n+1}} +V^{(f)}_{n,n} \ket{f_{n}}+V^{(f)}_{n-1,n} \ket{f_{n-1}},\label{action:Vonf}
\end{align}
where 
\begin{align}
&V^{(f)}_{n+1,n}=-\frac{\left(n-2 \alpha +\beta +1\right) \left(n-2 \alpha -\rho \right) \left( n+N-2 \alpha -\rho  +1\right) \left(n-N+\beta -\rho+2\zeta  +1 \right) \left(n-\beta -\rho\right)}{\left(2n-2 \alpha -\rho  \right)\left(2n-2 \alpha -\rho +1\right)^{2} \left(2n-2 \alpha -\rho+2\right) }\,, \label{exp:Vp}\\
&V^{(f)}_{n,n}=\frac{n(n-2\alpha+\beta)(n-N+\beta-\rho+2\zeta)(n+N-2\alpha-\rho)}{(2n-2\alpha-\rho-1)(2n-2\alpha-\rho)} \nonumber\\
&\qquad+\frac{(n-N)(n-2\alpha-\rho)(n-\beta-\rho)(n+N-2\alpha-\beta-2\zeta)}{(2n-2\alpha-\rho)(2n-2\alpha-\rho+1)}-(\beta + \zeta + 1)(\beta + \zeta)
, \label{exp:V0}\\
&V^{(f)}_{n-1,n}=-n(n-N-1)(n+N-2\alpha-\beta-2\zeta-1).\label{exp:Vm}
\end{align}
In the previous relations, we have used the conventions that $\ket{f_{N+1}}=\ket{f_{-1}}=0$. Again, one can use the formulas above
and relation \eqref{eq:rel-o} to obtain the tridiagonal action of the transposed operator $V^\top$ on the basis $f^\ast$.

\subsection{Representations in the $d$ and $d^*$ bases\label{ssec:dds}} 
The bases $d$ and $d^*$ are obtained from the GEVP $(X- \lambda_nZ)\ket{d_n}=0$ and its adjoint. The actions of $Z$ and $X$ in the $\ket{d_n}$ basis is bidiagonal and are given explicitly by:
    \begin{align}
  Z \ket{d_n} &=Z^{(d)}_{n+1,n} \ket{d_{n+1}} +Z^{(d)}_{n,n} \ket{d_{n}},\\
  X \ket{d_n} &=X^{(d)}_{n+1,n} \ket{d_{n+1}} +X^{(d)}_{n,n} \ket{d_{n}},
\end{align}
where
\begin{align}
    &Z^{(d)}_{n+1,n}= \dfrac{n-2\alpha+\beta+1}{n-\alpha+1},
\quad  
Z^{(d)}_{n,n}=n-\alpha,
\\
&X^{(d)}_{n+1,n}=- \dfrac{(n-\alpha)(n-2\alpha+\beta+1)}{n-\alpha+1},
\quad X^{(d)}_{n,n}=-(n-\alpha)^2.
\end{align}
The actions of the transposed operators on $\ket{d_n^*}$ are given by:    
\begin{align}
 Z^{\top} \ket{d_n^*} &=Z^{\top(d*)}_{n,n} \ket{d_{n}^*}+Z^{\top(d*)}_{n-1,n} \ket{d_{n-1}^*}, \\
  X^{\top} \ket{d_n^*} &=X^{\top(d*)}_{n,n} \ket{d_{n}^*}+X^{\top(d*)}_{n-1,n} \ket{d_{n-1}^*},
\end{align}
where
\begin{align}
&Z^{\top(d*)}_{n,n}=n-\alpha, \quad  
Z^{\top(d*)}_{n-1,n}=\dfrac{n-2\alpha+\beta}{n-\alpha},\label{action:Zt-d}
\\
&X^{\top(d*)}_{n,n}=-(n-\alpha)^2, \quad  
X^{\top(d*)}_{n-1,n}=-n+2\alpha-\beta.
\end{align}

The operator $V$ is represented in the basis $\ket{d_n}$ by an Hessenberg matrix but, since it is not used in the following, we do not provide its explicit expression. 
Although $V$ does not take a simple form, the operator $VZ$ is tridiagonal in the basis $\ket{d_n}$:
  \begin{align}\label{eq:VZ on dn}
  VZ \ket{d_n} &=(VZ)^{(d)}_{n+1,n} \ket{d_{n+1}}
  +(VZ)^{(d)}_{n,n} \ket{d_{n}}
 +(VZ)^{(d)}_{n-1,n} \ket{d_{n-1}},
\end{align}  
where
\begin{align}
&(VZ)^{(d)}_{n+1,n}=-\frac{(n-2\alpha+\beta+1)(n-\alpha-\zeta)(n-\alpha-\zeta+1)}{n-\alpha+1}, \\
&(VZ)^{(d)}_{n,n}=N(N-\beta-2\alpha-2\zeta)(n-\alpha+\beta+1) \nonumber\\
&+(\beta+\zeta)(\beta+\zeta+1)(n+\alpha)-2n(n-2\alpha-\zeta)(n-\alpha-\zeta) \label{eq:action-VZcoeff2}
,\\
&(VZ)^{(d)}_{n-1,n}=-n(n-N-1)(n-\alpha)(n+N-2\alpha-\beta-2\zeta-1). 
\end{align}
Similarly, the operator $V^\top Z^\top$ is tridiagonal in the basis $\ket{d^*_n}$:
  \begin{align}
  V^{\top}Z^{\top} \ket{d_n^*} &= (V^{\top}Z^{\top})^{(d*)}_{n+1,n} \ket{d^*_{n+1}} +(V^{\top}Z^{\top})^{(d*)}_{n,n} \ket{d^*_{n}}+(V^{\top}Z^{\top})^{(d*)}_{n-1,n} \ket{d^*_{n-1}},
\end{align}  
where
\begin{align}
&(V^{\top}Z^{\top})^{(d*)}_{n+1,n}=-(n-\alpha+1)(n-N)(n+1)(n+N-2\alpha-\beta-2\zeta)\label{action:VZ-coeff1},
\\
&(V^{\top}Z^{\top})^{(d*)}_{n,n}=
(VZ)^{(d)}_{n,n},\label{action:VZ-coeff2}\\
&(V^{\top}Z^{\top})^{(d*)}_{n-1,n}=-\frac{(n-\alpha-\zeta-1)(n-\alpha-\zeta)(n-2\alpha+\beta)}{n-\alpha}\label{action:VZ-coeff3}.
\end{align}

Let us record here the following formula for the action of $Z$ on $\ket{d_n}$ for later use,
\begin{align}
       Z \ket{d_n} =(\alpha-\beta-1)\frac{(n-N-\alpha+\beta+1)_{N-n}}{(n-N,\alpha-N)_{N-n}}\sum_{\ell=0}^{N} \frac{(n-N)_{N-\ell}(\alpha-N)_{N-\ell+1}}{(n-N-\alpha+\beta+1)_{N-\ell+1}} \ket{\ell}.\label{eq:Z-dn}
\end{align}

 \subsection{Link with the Leonard trios}
The goal of this subsection is to indicate how the present work can be cast in the algebraic setting of Leonard trios introduced in \cite{Trio2026}. Before doing so, let us collect the action of $V$ and $X$ on the eigenbasis $\ket{z_n}$ of $Z$
\begin{align}
    V\ket{z_n}&=V^{(z)}_{n+1,n}\ket{z_{n+1}}+V^{(z)}_{n,n}\ket{z_{n}}+V^{(z)}_{n-1,n}\ket{z_{n-1}}\, \label{eq:VtridiagonZ},\\
    X\ket{z_n}&=-V^{(z)}_{n+1,n}\ket{z_{n+1}}-(n-\alpha)^2\ket{z_{n}}
\end{align}
where
\begin{align}
  &V^{(z)}_{n+1,n}=-(n-2\alpha+\beta+1)\,,\qquad V^{(z)}_{n-1,n}=-n(n-N-1)(n+N-2\alpha-\beta-2\zeta-1)\,,\\
 & V^{(z)}_{n,n}=(n-2\alpha+\beta+1)(n-N)+n(n+N-2\alpha-\beta-2\zeta-1)-(N - \beta-\zeta )(N- \beta-\zeta - 1 )\,.
\end{align}
Moreover, let us consider the operator $\widetilde{V}=XZ^{-1}$ which admits the following form on the basis $\ket{z_n}$ 
\begin{equation}\label{eq:TildeTridiadonz}
    \widetilde{V}\ket{z_n}=-(n-\alpha) \ket{z_n}+\frac{n-2\alpha+\beta+1}{n-\alpha}\ket{z_{n+1}}.
\end{equation}

We will now show that the triplet $(V,\widetilde{V},Z)$ is a Leonard trio, and more precisely, a lower reduced Leonard trio. Let first recall their definition for the convenience of the reader (see \cite[Def. 4.1]{Trio2026}).
\begin{definition}\label{def:rLT}
 Let $\mathbb{V}$ be a finite dimensional complex vector space and $\mathrm{End}(\mathbb{V})$ its space of endomorphisms. A lower reduced Leonard trio is an ordered triplet $(V,\widetilde{V},Z)$ of elements of $\mathrm{End}(\mathbb{V})$ that satisfies the following properties:
   \begin{itemize}
       \item[(i)] There exists a basis of $\mathbb{V}$ with respect to which the matrix representing $V$ is diagonal, the one representing $\widetilde{V} Z$ is tridiagonal, and the one representing $Z$ is irreducible tridiagonal.
       \item[(ii)] There exists a basis of $\mathbb{V}$ with respect to which the matrix representing $\widetilde{V}$ is diagonal, the one  representing $ Z V$ is  tridiagonal, and the one representing $Z$ is irreducible  lower bidiagonal.
       \item[(iii)] There exists a basis of $\mathbb{V}$ with respect to which the matrix representing $Z$ is diagonal, the one representing  $\widetilde{V}$ is irreducible lower bidiagonal, and $V$ is irreducible tridiagonal.
   \end{itemize}
   Here, bidiagonal and tridiagonal matrices are called irreducible when there coefficients on the upper and lower diagonals are all non zero. 
\end{definition}

\begin{pr}
The triplet $(V,\widetilde V, Z)$, with $V$ defined in \eqref{eq:Vf}, $Z$ in \eqref{eq:Zf}, and $\widetilde V = XZ^{-1}$ with $X$ given by \eqref{eq:Xf}, is a lower reduced Leonard trio.
\end{pr}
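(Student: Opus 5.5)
The plan is to verify the three conditions of Definition~\ref{def:rLT} one at a time. Each condition is checked in one of the bases already built in Section~\ref{sec:eigenbases}: the $e$ basis for (i), the $d$ basis for (ii), and the $z$ basis for (iii). Throughout, we assume generic parameters: $\alpha\notin\{0,\dots,N\}$, so that $Z$ is invertible and $\widetilde V=XZ^{-1}$ is well defined, and the $\mu_n$ are pairwise distinct. Under these assumptions the relevant off-diagonal coefficients below do not vanish.

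\emph{Condition (iii).} We work in the basis $\ket{z_n}$. Here $Z$ is diagonal by construction. By \eqref{eq:TildeTridiadonz}, $\widetilde V$ is lower bidiagonal, with subdiagonal entries $(n-2\alpha+\beta+1)/(n-\alpha)$; these are nonzero generically. By \eqref{eq:VtridiagonZ}, $V$ is tridiagonal, with off-diagonal entries $-(n-2\alpha+\beta+1)$ and $-n(n-N-1)(n+N-2\alpha-\beta-2\zeta-1)$. For $1\le n\le N$ these are nonzero generically, so $V$ is irreducible tridiagonal.

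\emph{Condition (i).} We work in the basis $\ket{e_n}$. Here $V$ is diagonal. By \eqref{action:Zone}--\eqref{action:Zone:coe3}, $Z$ is tridiagonal, with $Z^{(e)}_{n+1,n}=1$ and $Z^{(e)}_{n-1,n}\neq0$ generically, so it is irreducible. Moreover $\widetilde V Z = X$, and by \eqref{action:Xone} $X$ is tridiagonal in this basis. This is the point where the Heun-operator nature of $X$ (Proposition on \eqref{eq:Heunbi}) is used.

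\emph{Condition (ii).} We use the $d$ basis. First, $\widetilde V$ is diagonal there: from $X\ket{d_n}=\lambda_n Z\ket{d_n}$ we get
\[
\widetilde V\,(Z\ket{d_n})=XZ^{-1}Z\ket{d_n}=\lambda_n Z\ket{d_n},
\]
so the natural basis is $\{Z\ket{d_n}\}$, with eigenvalues $\lambda_n=\alpha-n$, which are distinct. Second, $Z$ must be irreducible lower bidiagonal in this basis. Since $Z\ket{d_n}=Z^{(d)}_{n+1,n}\ket{d_{n+1}}+Z^{(d)}_{n,n}\ket{d_n}$, applying $Z$ gives
\[
Z(Z\ket{d_n})=Z^{(d)}_{n+1,n}\,Z\ket{d_{n+1}}+Z^{(d)}_{n,n}\,Z\ket{d_n},
\]
with the same coefficients, and $Z^{(d)}_{n+1,n}=(n-2\alpha+\beta+1)/(n-\alpha+1)\neq0$. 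Third, $ZV$ must be tridiagonal in this basis. We have
\[
ZV(Z\ket{d_n}) = Z\,(VZ\ket{d_n}),
\]
and by \eqref{eq:VZ on dn} this equals $Z\bigl(\sum_{k=n-1}^{n+1}(VZ)^{(d)}_{k,n}\ket{d_k}\bigr)$. Since $Z$ commutes with itself, this is $\sum_k (VZ)^{(d)}_{k,n}\,Z\ket{d_k}$. So the matrix of $ZV$ in the basis $\{Z\ket{d_n}\}$ coincides with the tridiagonal matrix of $VZ$ in $\{\ket{d_n}\}$.

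\emph{Main obstacle.} The substantive input is the tridiagonality of $VZ$ on $\ket{d_n}$, i.e.\ \eqref{eq:VZ on dn}, which we take from the paper. The rest is bookkeeping: using the basis $Z\ket{d_n}$ rather than $\ket{d_n}$ in (ii), and stating the genericity conditions (listed at the start) that make the irreducibility claims hold.
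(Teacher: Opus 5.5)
Your proof is correct and follows the paper's argument essentially step for step: (iii) is read off in the $z$ basis, (i) in the $e$ basis using $\widetilde V Z = X$, and (ii) in the basis $\{Z\ket{d_n}\}$, where $ZV$ inherits the tridiagonal matrix of $VZ$ on $\{\ket{d_n}\}$. The only differences are that you make the genericity assumptions explicit and that you get the lower bidiagonal action of $Z$ on $Z\ket{d_n}$ directly from $Z^{(d)}$, with subdiagonal entries $(n-2\alpha+\beta+1)/(n-\alpha+1)$; the paper instead goes through a $z$-expansion of $\ket{\tilde e_n}$, and your route is slightly cleaner and avoids a normalization mismatch in the coefficient the paper displays.
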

\begin{proof}
    Let us check that the triplet $(V,\widetilde{V}, Z)$ satisfies the properties required by definition \ref{def:rLT}. First, (iii) is already proved since in the basis $\ket{z_n}$, $Z$ is diagonal by definition, $V$ is tridiagonal thanks to \eqref{eq:VtridiagonZ} and $\widetilde{V}$ is lower bidiagonal according to \eqref{eq:TildeTridiadonz}. Similarly, (i) was proved in Subsection \ref{sec:repe}. We are thus left to 
check (ii). An eigenbasis for $\widetilde{V}$ is provided by the vectors $\ket{\tilde{e}_n} =Z\ket{d_n}$ where the associated eigenvalue is $\alpha-n$ from the definition of $\ket{d_n}$. An expression of these vectors in terms of the basis $\ket{n}$ is given by \eqref{eq:Z-dn}. In terms of the basis $\ket{z_n}$ they are given explicitly by
\begin{equation}
    \ket{\tilde{e}_n}=\sum_{\ell=n}^N\frac{(n-2\alpha+\beta+1)_{\ell-n}}{(\ell-n)!(n-\alpha)_{\ell-n}}\ket{z_\ell}.
\end{equation}
One can then check the following action of $Z$ on $\ket{\tilde{e}_n}$
\begin{equation}
Z\ket{\tilde{e}_n}=(n-\alpha)\ket{\tilde{e}_n}+(n-2\alpha+\beta+1)\ket{\tilde{e}_{n+1}}\,.
\end{equation}
Using the action \eqref{eq:VZ on dn} of $VZ$ on $\ket{d_n}$, one finds that the operator $ZV$ acts tridiagonally on $\ket{\tilde{e}_n}$ via 
\begin{align}
ZV\ket{\tilde{e}_n}
=ZVZ\ket{d_n}
=(VZ)_{n+1,n}^{(d)}\ket{\tilde{e}_{n+1}}+(VZ)_{n,n}^{(d)}\ket{\tilde{e}_{n}}+(VZ)_{n-1,n}^{(d)}\ket{\tilde{e}_{n-1}}\,.
\end{align}

\end{proof}


\section{Racah Polynomials}
\label{sec:racah-pols} It is well-known that the Racah polynomials can be interpreted as overlap functions between basis vectors for  representation spaces of the Racah algebra. Here, in spirit of previous works \cite{tsujimoto2024meta}, \cite{ bernard2024meta} on the ($q$-)Hahn case, we explain how the Racah polynomials can be recovered and characterized in a straightforward manner using the bidiagonal representation of the meta Racah algebra. This approach provides a unified framework for the study of both orthogonal polynomials and bispectral rational functions.
Recall that the Racah polynomials  are defined in terms of the generalized hypergeometric functions as 
\begin{align}
\label{eq:racah-pols}
    R_i(x;\hat\alpha,\hat\beta,\hat\gamma,N)={}_4F_3 \left({{-i,\;i+\hat\alpha+\hat\beta+1, \;-x,\;x+\hat\gamma-N}\atop
{\hat\alpha+1,\; \hat\beta+\hat\gamma+1,\;-N}}\;\Bigg\vert \; 1\right)\,,\qquad i=0,\ldots,N.
\end{align}

As noted previously, the matrices $V$ and $X+\rho Z$ satisfy the Racah algebra and, as a consequence \cite{genest2014superintegrability,genest2014racah}, the overlap coefficients between the bases $\ket{e_n}$ and $\ket{f_n}$ diagonalizing respectively $V$ and $X+\rho Z$ are proportional to the Racah polynomials. We denote these overlap coefficients by
\begin{equation}
S_m(n) = \bra{f_n^*}\ket{e_m}, 
\qquad 
\tilde S_m(n) = \bra{f_n}\ket{e_m^*}.
\end{equation}

\subsection{Identification} We start by identifying precisely the Racah polynomials as overlap coeﬃcients between EVP
bases in the representation of the meta Racah algebra.
\begin{pr}
\label{pr:racah-poly-as-transition-coeff}
The functions $S_m(n)=\bra{f_n^*}\ket{e_m}$ and $\tilde S_m(n) = \bra{f_n}\ket{e_m^*}$ are both expressible in terms of Racah polynomials as follows
\begin{align}
 S_m(n)&=\dfrac{(\hat\alpha+1)_n(-N,\hat\beta+\hat\gamma+1)_m}{n!  (n-N+\hat\gamma)_n(m+\hat\alpha+\hat\beta+1)_m}R_m(n;\hat\alpha,\hat\beta,\hat\gamma,N),\label{eq:sm-Racah}\\
\tilde S_m(n&)=\frac{(-1)^N(-N,\hat\gamma-\hat\alpha-N,-\hat\beta-N)_{N-m}(\hat\alpha+1)_m(-\hat\gamma-\hat\beta-n)_n}{(-N-m-\hat\alpha-\hat\beta-1)_{N-m}(n-N,-\hat\gamma-n)_{N-n}(\hat\gamma-\hat\alpha-N,-N-\hat\beta)_n}  R_m(n;\hat\alpha,\hat\beta,\hat\gamma,N), \label{eq:smtilde-Racah}
\end{align}
with
\begin{equation}
\label{eq:parameters-racah}
\hat\alpha=-\beta-\rho-1, \quad \hat\beta=-\beta+\rho-2\zeta-1,\quad \hat\gamma=N-2\alpha-\rho.
\end{equation}
\end{pr}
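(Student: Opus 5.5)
The plan is to compute $S_m(n)=\bra{f_n^*}\ket{e_m}$ directly from the standard-basis expansions of Proposition~\ref{pr:basis-in-standard}, and to identify the resulting hypergeometric sum with a Racah polynomial.

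Since $\bra{f_n^*}\ket{e_m}=\sum_{\ell}\bra{f_n^*}\ket{\ell}\bra{\ell}\ket{e_m}$, inserting \eqref{eq:expression-fn-ast} and \eqref{eq:expression-en} gives
\begin{equation*}
S_m(n)=\frac{(-\beta-\rho)_n(-N,N-2\alpha-\beta-2\zeta)_m}{n!\,(n-2\alpha-\rho)_n(m-2\beta-2\zeta-1)_m}\sum_{\ell=0}^{N}\frac{(-n,n-2\alpha-\rho,-m,m-2\beta-2\zeta-1)_\ell}{\ell!\,(-\beta-\rho,-N,N-2\alpha-\beta-2\zeta)_\ell}.
\end{equation*}
This is a terminating balanced ${}_4F_3(1)$; the series terminates because of $(-n)_\ell$ and $(-m)_\ell$. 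With the parameters \eqref{eq:parameters-racah} one has $\hat\alpha+1=-\beta-\rho$, $\hat\alpha+\hat\beta+1=-2\beta-2\zeta-1$ and $\hat\gamma-N=-2\alpha-\rho$. It remains to check $\hat\beta+\hat\gamma+1=N-2\alpha-\beta-2\zeta$, and this is immediate.

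The numerator parameters are therefore exactly $-m$, $m+\hat\alpha+\hat\beta+1$, $-n$, $n+\hat\gamma-N$, and the denominator parameters are $\hat\alpha+1$, $\hat\beta+\hat\gamma+1$, $-N$. So the sum equals $R_m(n;\hat\alpha,\hat\beta,\hat\gamma,N)$. Rewriting the prefactor in the hatted parameters gives \eqref{eq:sm-Racah} directly: $(-\beta-\rho)_n=(\hat\alpha+1)_n$, $(n-2\alpha-\rho)_n=(n-N+\hat\gamma)_n$, and $(m-2\beta-2\zeta-1)_m=(m+\hat\alpha+\hat\beta+1)_m$.

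For $\tilde S_m(n)=\bra{f_n}\ket{e_m^*}$ I would follow the same route with \eqref{eq:expression-fn} and \eqref{eq:expression-en-ast}. Both expansions are naturally indexed by $N-\ell$, so I reindex with $k=N-\ell$. This produces a balanced ${}_4F_3(1)$ with numerator parameters $n-N$, $2\alpha+\rho-N-n$, $m-N$, $2\beta+2\zeta-N-m+1$, and denominator parameters $\beta+\rho-N+1$, $-N$, $2\alpha+\beta+2\zeta-2N+1$. In hatted form these read $n-N$, $-n-\hat\gamma$, $m-N$, $-N-m-\hat\alpha-\hat\beta$, and $-\hat\beta+\hat\gamma-N-\ldots$ (to be matched precisely). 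This is the Racah polynomial evaluated at the reflected point, i.e. degree $N-m$ and variable $N-n$, possibly with permuted parameters.

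The main obstacle is converting this reflected series back to $R_m(n)$. I would try the standard reversal of a terminating balanced ${}_4F_3$, which reverses the summation order and produces a Pochhammer ratio. A Whipple-type transformation between balanced ${}_4F_3$'s can serve as a backup if reversal alone does not close.

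An alternative that avoids the transformation uses duality. Orthogonality \eqref{eq:orth-e}–\eqref{eq:orth-f} together with completeness \eqref{eq:completeness} shows that the matrix $(\tilde S_m(n))$ is the inverse transpose of $(S_m(n))$. The Racah orthogonality relation then forces $\tilde S_m(n)=w(n)h_m^{-1}\cdot(\text{prefactor})\,R_m(n)$, where $w$ is the Racah weight and $h_m$ the norm. Simplifying the product of weight, norm and prefactor should reproduce the stated Pochhammer prefactor in \eqref{eq:smtilde-Racah}, including the $(-1)^N$ sign. I expect this bookkeeping of Pochhammer symbols, such as $(a)_{N-n}$ versus $(a)_n$ reflections, to be the laborious but routine final step.
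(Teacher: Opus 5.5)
\textbf{What matches the paper.} Your computation of $S_m(n)$ is exactly the paper's: the same ${}_4F_3$ sum, with the same matching of parameters and prefactor. For $\tilde S_m(n)$, your reindexed series is also the paper's \eqref{eq:hypergeometric-prop}. Your guess about it is right: in hatted form it is precisely $R_{N-m}(N-n;-\hat\alpha-N-1,-\hat\beta-N-1,-\hat\gamma,N)$. One small slip: $2\beta+2\zeta-N-m+1=-N-m-\hat\alpha-\hat\beta-1$, not $-N-m-\hat\alpha-\hat\beta$. The denominators are $-N$, $-\hat\alpha-N$ and $-\hat\beta-\hat\gamma-N$.

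\textbf{The gap: reversal cannot work.} Reversing a terminating series keeps the number of terms. The reflected series has $\min(N-m,N-n)+1$ terms, whereas $R_m(n)$ has $\min(m,n)+1$. For $m=n=0$, for instance, $R_0(0)=1$ is a single term while the reflected series has $N+1$ terms. Concretely, if $n\ge m$ and you reverse in $n-N$, you get a ${}_4F_3$ with numerator parameter $n+1$ and denominator parameter $1+n-m$ in place of $-N$ and $m-N$. That is not a Racah series.

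\textbf{What is needed instead.} You need a genuine transformation, namely your backup \eqref{eq:Whipple}, applied twice with $d=-N$ each time.
\begin{enumerate}
\item Take the terminating parameter $m-N$, with $a=2\beta+2\zeta-N-m+1$, $b=n-N$, $c=2\alpha+\rho-N-n$, and $e,f$ the two remaining denominators. Then $d-b=-n$ and $d-c=n-2\alpha-\rho$, and the new denominators are $\beta-2\alpha+1$ and $\beta+2\zeta-\rho-N+1$.
\item Take the terminating parameter $-n$, with $a=n-2\alpha-\rho$, $b=m-N$, $c=2\beta+2\zeta-N-m+1$. Now $d-b=-m$ and $d-c=m-2\beta-2\zeta-1$, and the denominators become $-\beta-\rho$ and $N-2\alpha-\beta-2\zeta$.
\end{enumerate}
The two Pochhammer ratios $(e-a,f-a)_k/(e,f)_k$ from these steps ($k=N-m$, then $k=n$) combine with the original prefactor. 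Simplifying with $(-m-a+1)_m=(-1)^m(a)_m$ gives the prefactor of \eqref{eq:smtilde-Racah}. This is exactly the paper's proof.

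\textbf{Your duality alternative.} This is a genuinely different and valid route. From \eqref{eq:orth-e} and \eqref{eq:completeness} you get $\sum_n\tilde S_k(n)S_m(n)=\delta_{k,m}$. Write $S_m(n)=\kappa_m\nu_nR_m(n)$ and use the classical relation $\sum_n\mathcal{W}_nR_k(n)R_m(n)=\mathcal{N}_m\delta_{k,m}$. Uniqueness of the inverse matrix then forces $\tilde S_m(n)=\mathcal{W}_nR_m(n)/(\kappa_m\nu_n\mathcal{N}_m)$.

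This trades the two Whipple steps for the weight and norm imported from the literature (Koekoek et al.'s Racah family with $\gamma+1=-N$). Be aware that the paper uses this very proposition to derive the Racah orthogonality algebraically, via \eqref{eq:orth-S}. Within the paper's program your route is therefore circular. As a standalone proof it is fine once the prefactor bookkeeping is carried out.
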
 
\begin{proof}
Given the expressions \eqref{eq:expression-en} and \eqref{eq:expression-fn-ast} for $\ket{e_n}$ and $\ket{f_n^\ast}$ respectively, using $\bra\ell\ket{k}=\delta_{\ell,l}$ and simple transformations, one straightforwardly finds
\begin{align}
      S_m(n)&=\dfrac{(-\beta-\rho)_n(-N,N-2\alpha-\beta-2\zeta)_m}{n! (n-2\alpha-\rho)_n (m-2\beta-2\zeta-1)_m}
    \sum_{\ell=0}^N \dfrac{(-n,-m,n-2\alpha-\rho,m-2\beta-2\zeta-1)_{\ell}}{\ell! (-N,-\beta-\rho,N-2\alpha-\beta-2\zeta)_{\ell}},
\end{align}
which in view of definition \eqref{eq:racah-pols} is readily seen to yield \eqref{eq:sm-Racah} under the identification \eqref{eq:parameters-racah}. The
derivation of formula \eqref{eq:smtilde-Racah} requires a little more eﬀort. Given the expressions of \eqref{eq:expression-en-ast} and \eqref{eq:expression-fn} of $\ket{e_n^\ast}$ and $\ket{f_n}$ respectively, one finds
\begin{align}
    \tilde S_m(n)&=\frac{(-N,N+m-2\alpha-\beta-2\zeta)_{N-m}(\beta+\rho-N+1)_{N-n}}{(2\beta+2\zeta-N-m+1)_{N-m}(n-N,2\alpha+\rho-N-n)_{N-n}}  \nonumber\\
    &\quad \times
 {}_4F_3 \left({{m-N,-N-m+2\beta+2\zeta+1,n-N,-N+2\alpha+\rho-n}\atop
{-N,2\alpha+\beta+2\zeta-2N+1,\beta+\rho-N+1}}\;\Bigg\vert \; 1\right)\label{eq:hypergeometric-prop}.
\end{align}
The hypergeometric function in \eqref{eq:hypergeometric-prop} needs to be transformed to identify the Racah polynomials. The following formula for terminating balanced $\hg{4}{3}$-series  will be used \cite{GR}
\begin{equation}
\label{eq:Whipple}
{}_4F_3 \left({{-n,a,b,c}\atop
{d,e,f}}\;\Bigg\vert \; 1\right)=\frac{(e-a,f-a)_n}{(e,f)_n}{}_4F_3 \left({{-n,a,d-b,d-c}\atop
{d,a+1-n-e,a+1-n-f}}\;\Bigg\vert \; 1\right)\,.
\end{equation}
where $n$ is a non-negative integer and  $1-n+a+b+c=d+e+f$.
Applying twice \eqref{eq:Whipple} with $a=-N-m+2\beta+2\zeta+1,\, b=n-N, c=-N+2\alpha+\rho-n, d=-N, e=2\alpha+\beta+2\zeta-2N+1$ and $f=\beta+\rho-N+1$, then with $a=n-2\alpha-\rho\,, b=m-N,\, c=2\beta+2\zeta-N-m+1,\, d=-N,\, e=\beta-2\alpha+1$ and $f=\beta-\rho+2\zeta-N+1$, one gets 
\begin{align}
&{}_4F_3 \left({{m-N,-N-m+2\beta+1,n-N,-N+2\alpha+\rho-n}\atop
{-N,2\alpha+\beta+1-2N,-N+\beta+\rho+1}}\;\Bigg\vert \; 1\right)\nonumber\\
&=\frac{(m+2\alpha-\beta-N,\rho+m-\beta-2\zeta)_{N-m}}{(2\alpha+\beta+2\zeta-2N+1,\beta+\rho-N+1)_{N-m}}\frac{(2\alpha+2\zeta+\beta-N-n+1,\beta+\rho-n+1)_{n}}{(\beta-2\alpha+1,\beta-\rho+2\zeta-N+1)_{n}}\nonumber\\
&\times {}_4F_3 \left({{-m, m-2\beta-2\zeta-1,-n,n-2\alpha-\rho}\atop
{-N,-\beta-\rho,N-2\alpha-2\zeta-\beta}}\;\Bigg\vert \; 1\right)\,.
\end{align}
Combining these results, one finds
\begin{align}
    \tilde S_m(n)&=\frac{(-1)^{N+m}(-N,m+2\alpha-\beta-N,\rho+m-\beta-2\zeta)_{N-m}(\beta+\rho-N+1)_{N-n}}{(2\beta+2\zeta-N-m+1,\beta+\rho-N+1)_{N-m}(n-N,2\alpha+\rho-N-n)_{N-n}}\\
&\times \frac{(2\alpha+2\zeta+\beta-N-n+1,\beta+\rho-n+1)_{n}}{(\beta-2\alpha+1,\beta-\rho+2\zeta-N+1)_{n}} {}_4F_3 \left({{-m, m-2\beta-2\zeta-1,-n,n-2\alpha-\rho}\atop
{-N,-\beta-\rho,N-2\alpha-2\zeta-\beta}}\;\Bigg\vert \; 1\right)\,.\nonumber
\end{align}
Some simplifications can now be performed. One notes that 
\begin{align}
\frac{(\beta+\rho-N+1)_{N-n}(\beta+\rho-n+1)_n}{(\beta+\rho-N+1)_{N-m}}&=(-1)^m(-\beta-\rho)_m\,,
\end{align}
and, from the identity $(-m-a+1)_m=(-1)^m(a)_m$, that 
\begin{align}
(m+2\alpha-\beta-N)_{N-m}=(-1)^{N-m}(\beta-2\alpha+1)_{N-m}\,,\\
(\rho+m-\beta-2\zeta)_{N-m}=(-1)^{N-m}(\beta+2\zeta-\rho-N+1)_{N-m}\,.
\end{align}
Putting all this together, 
\begin{align}
    \tilde S_m(n)&=\frac{(-1)^{N}(-N,\beta-2\alpha+1,\beta+2\zeta-\rho-N+1)_{N-m}(-\beta-\rho)_m}{(2\beta+2\zeta-N-m+1)_{N-m}(n-N,2\alpha+\rho-N-n)_{N-n}}\nonumber\\
&\times \frac{(2\alpha+2\zeta+\beta-N-n+1)_{n}}{(\beta-2\alpha+1,\beta-\rho+2\zeta-N+1)_{n}} {}_4F_3 \left({{-m, m-2\beta-2\zeta-1,-n,n-2\alpha-\rho}\atop
{-N,-\beta-\rho,N-2\alpha-2\zeta-\beta}}\;\Bigg\vert \; 1\right)\,.
\end{align}
which in view of definition \eqref{eq:racah-pols} is readily seen to yield \eqref{eq:smtilde-Racah} under the identification \eqref{eq:parameters-racah}.
\end{proof}

\subsection{Orthogonality relation.} From the orthogonality relations \eqref{eq:orth-e} and the resolutions of the identity
\eqref{eq:completeness}, the overlap functions
\( S_m(n) \) and \( \tilde S_m(n) \) satisfy an orthogonality relation, reflecting
their orthogonality with respect to the variable $m$:
\begin{align}
\label{eq:orth-S}
    &\sum_{n=0}^N \tilde S_{k}(n) S_{m}(n) = \delta_{k,m}\,.
\end{align}
This property
allows one to recover the orthogonality relation of the Racah polynomials and their
normalization constants. Indeed, substituting the expressions for $S_m(n)$ and $\tilde S_m(n)$ in \eqref{eq:orth-S}
one finds
\begin{align}
    &\sum_{n=0}^N \mathcal{W}_n R_{k}(n,\hat \alpha,\hat \beta,\hat \gamma,N) R_{m}(n,\hat \alpha,\hat \beta,\hat \gamma,N) = \mathcal{N}_m\delta_{k,m},
\end{align}
where 
\begin{align}
   & \mathcal{W}_n=\frac{(-\hat \beta-\hat \gamma-n,\hat \alpha+1)_n}{n!(n-N,-\hat \gamma -n)_{N-n}(\hat \gamma-\hat \alpha-N,-N-\beta,n-N+\hat \gamma)_n},\\
    & \mathcal{N}_m=\frac{(-1)^N(-N-k-\hat \alpha-\hat\beta-1)_{N-m}(m+\hat\alpha+\hat\beta+1)_m}{(-N,\hat\gamma-\hat\alpha-N,-\hat\beta-N)_{N-m}(\hat\alpha+1,-N,\hat\beta+\hat\gamma+1)_m}.
\end{align}
The algebraic approach described here allows us to recover the weight and the normalization for the orthogonality relation of the Racah polynomials. As is classical, a restriction on the parameters is needed for the weight and the norm to be positive \cite{Koekoek}.

\subsection{Recurrence relation and  difference equation.}
We now show explicitly how the bispectral properties of the Racah polynomials, that is, the recurrence relation and diﬀerence equation, follow from the actions of the operators $V$ , $X+ \rho Z$ and their
transposes on the EVP bases. Notice that the diﬀerence equation of the Racah
polynomials can be obtained straightforwardly from their recurrence relation by using their duality property. This result is quite standard \cite{Koekoek}. We rederive it in the framework of the meta Racah algebra representation theory for the sake of illustrating how the meta Racah framework provides a unified treatment of the OPs and BRFs. 

The representation of the operator $V$ in the basis $\ket{e_m}$ and $\ket{f_n}$ computed in Section \ref{sec:repf} allow us to recover the recurrence relation of the Racah polynomials. Indeed, let us compute
\begin{align}
    \bra{f^*_n}\,\,|\,V\ket{e_m}&=\mu_m S_m(n)\\
    &=V^{(f)}_{n,n-1}S_m(n-1)+V^{(f)}_{n,n}S_m(n)+V^{(f)}_{n,n+1}S_m(n+1)\,,
\end{align}
where $V^{(f)}$ are given by \eqref{exp:Vp}-\eqref{exp:Vm}. With the expression of $S_m(n)$ in terms of the Racah polynomials given in proposition \ref{pr:racah-poly-as-transition-coeff}, we recover the recurrence relation of the Racah polynomials.

The difference equation is obtained similarly starting from 
\begin{align}
    \bra{f^*_n}\,\,|\,X+\rho Z \ket{e_m},
\end{align}
and using the representations of $X$ and $Z$ in the basis $\ket{e_n}$ computed in Section \ref{sec:repe}.

\section{Racah Rational Functions}
\label{sec:racah-rational}
This section will provide an algebraic interpretation of the following rational functions 
\begin{align}
& \cU_m(n)=\cU_m(n;a,b,c,N)={}_4F_3 \left({{-m,-n,-a,m-2b-2c-1}\atop
{-N,a-b-n,N-2a-b-2c}}\;\Bigg\vert \; 1\right),\label{eq:rational-cU}
\\
&\widetilde\cU_m(n)=\widetilde\cU_m(n;a,b,c,N)=\cU_{m}(N-n;N-a-1,b+2c-2,2-c,N)\label{eq:rationa-cV}.
\end{align}
\begin{remark}
    In \cite{tsujimoto2024meta} the authors introduced the rational functions of Hahn type. These can be expressed as ${}_3F_2$ hypergeometric functions and can be obtained (up to a normalization) through the following limit
    \begin{equation}
        \lim_{t\to \infty}\mathcal{U}_m(n;t,t-a,\frac{1}{2}(N-1-b+2a)-t)={}_3F_2 \left({{-m,-n,m+b-N}\atop
{-N,a-n}}\;\Bigg\vert \; 1\right). 
    \end{equation}
\end{remark}

\subsection{Representation theoretic interpretation} In the case of BRFs of Racah type, the relevant scalar products to consider are  $U_m(n)=\bra{e_m}\ket{d_n^*}$ and $\widetilde U_m(n) = \bra{e_m^*}\,\,|Z\ket{d_n}$, that is, overlap functions between EVP and GEVP bases in the representation
of $m\sR$.
\begin{pr}
The functions $U_m(n)=\bra{e_m}\ket{d_n^*}$ and $\widetilde U_m(n)= \bra{e_m^*}\,\,|Z\ket{d_n}$ are respectively given as follows in terms of the rational Racah functions $\mathcal{U}_m$ and $\widetilde{\mathcal{U}}_m$:
\begin{align}
U_m(n)&=\frac{(\alpha-\beta-n)_n(-N,N-2\alpha-\beta-2\zeta)_m}{n!(-\alpha)_{n+1}(m-2\beta-2\zeta-1)_m}\cU_m(n;\alpha,\beta,\zeta,N)\,,\label{eq:Un}\\
\widetilde U_m(n) 
    &= \dfrac{ (n-\alpha) (-N+\alpha+\beta+2\zeta,-N+2\alpha-\beta)_{N-n}(m+1,-2N+2\alpha+\beta+2\zeta+1)_{N-m}}{ (n-N, n-\alpha,-2N+2\alpha+\beta+2\zeta+1)_{N-n}(-N-m+2\beta+2\zeta+1)_{N-m}} \, \\&\ \ \times\widetilde\cU_m(n;\alpha,\beta,\zeta,N)\,\nonumber.\label{eq:tildeUn}
\end{align}
\end{pr}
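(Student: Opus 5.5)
The plan is to compute both overlaps directly from the standard-basis expansions of Proposition~\ref{pr:basis-in-standard}, using $\bra{\ell}\ket{k}=\delta_{\ell,k}$, exactly as was done for $S_m(n)$ in Proposition~\ref{pr:racah-poly-as-transition-coeff}.

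For $U_m(n)=\bra{e_m}\ket{d_n^*}$, insert \eqref{eq:expression-en} (with index $m$) and \eqref{eq:expression-dnast}. The signs $(-1)^\ell$ cancel, and the overlap reduces to the prefactors times
\[
\sum_{\ell=0}^N \frac{(-m,m-2\beta-2\zeta-1,-n,-\alpha)_\ell}{\ell!\,(-N,N-2\alpha-\beta-2\zeta,\alpha-\beta-n)_\ell}.
\]
This is exactly the terminating ${}_4F_3$ defining $\cU_m(n;\alpha,\beta,\zeta,N)$ in \eqref{eq:rational-cU}, with $a=\alpha$, $b=\beta$, $c=\zeta$: the denominator parameters $-N$, $a-b-n$ and $N-2a-b-2c$ match. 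The sum truncates at $\min(m,n)$, so no issue arises with the $(-N)_\ell$ denominators. The product of the two normalizations is the prefactor in \eqref{eq:Un}, so this part is immediate.

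For $\widetilde U_m(n)=\bra{e_m^*}\,|Z\ket{d_n}$, use the recorded formula \eqref{eq:Z-dn} for $Z\ket{d_n}$ together with \eqref{eq:expression-en-ast}. Both expansions are written in powers of $N-\ell$, so I will set $j=N-\ell$. The overlap then becomes a prefactor times
\[
\sum_{j}\frac{(m-N,2\beta+2\zeta-N-m+1,n-N)_j\,(\alpha-N)_{j+1}}{j!\,(-N,2\alpha+\beta+2\zeta-2N+1)_j\,(n-N-\alpha+\beta+1)_{j+1}}.
\]
Writing $(x)_{j+1}=x\,(x+1)_j$ turns this into a ${}_4F_3$ with numerator parameters $m-N$, $2\beta+2\zeta-N-m+1$, $n-N$, $\alpha-N+1$ and denominator parameters $-N$, $2\alpha+\beta+2\zeta-2N+1$, $n-N-\alpha+\beta+2$. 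I then compare this with $\widetilde\cU_m(n)=\cU_m(N-n;N-\alpha-1,\beta+2\zeta-2,2-\zeta,N)$ from \eqref{eq:rationa-cV}. Substituting $a=N-\alpha-1$, $b=\beta+2\zeta-2$, $c=2-\zeta$ and $n\to N-n$ gives numerator parameters $-m$, $n-N$, $\alpha+1-N$, $m-2\beta-2\zeta+1$ and denominator parameters $-N$, $n-\alpha-\beta-2\zeta+1$, $2\alpha-\beta-2\zeta+2-N$. The parameters $n-N$, $\alpha+1-N$ and $-N$ already agree.

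The remaining three parameters do not match literally: on one side they are $m-N$, $2\beta+2\zeta-N-m+1$ and the lower parameters above; on the other, $-m$ and $m-2\beta-2\zeta+1$. This mismatch is the main obstacle. Both series are balanced and terminating, so I would apply the Whipple transformation \eqref{eq:Whipple}, once or twice as in the proof of Proposition~\ref{pr:racah-poly-as-transition-coeff}. The first attempt will take the terminating parameter to be $n-N$, choose $a=\alpha-N+1$, and keep $d=-N$. That swaps the $m$-dependent parameters into the form $N-(\cdot)$ and shifts the lower parameters appropriately, which should produce the $\widetilde\cU$ parameter set. Each application contributes a ratio of Pochhammer symbols of length $N-n$ (or $N-m$). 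These must then be combined with the normalization constants of $\ket{e_m^*}$ and $\ket{d_n}$, the factor $(\alpha-\beta-1)(\alpha-N)/(n-N-\alpha+\beta+1)$, and reflection identities such as $(-k-x+1)_k=(-1)^k(x)_k$. The aim is to reach the prefactor in the second formula of the proposition, which involves $(n-\alpha)$, $(m+1)_{N-m}$ and the lengths $N-n$ and $N-m$.

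If the parameter bookkeeping for the transformation fails, there is a fallback that avoids hypergeometric manipulations. One checks that both sides satisfy the same first-order recurrence in $n$, coming from the bidiagonal action of $Z^\top$ on $\ket{d_n^*}$ and the duality of \eqref{eq:orth-d}, and that they agree at $n=N$, where $\ket{d_N}$ is essentially a multiple of $\ket{N}$ and the overlap is explicit.
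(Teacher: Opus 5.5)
Your proposal is the paper's proof: $U_m(n)$ comes from directly pairing \eqref{eq:expression-en} with \eqref{eq:expression-dnast}, and $\widetilde U_m(n)$ comes from pairing \eqref{eq:expression-en-ast} with \eqref{eq:Z-dn} followed by a single application of \eqref{eq:Whipple}, with terminating parameter $n-N$, $a=\alpha+1-N$, $d=-N$, $e=2\alpha+\beta+2\zeta-2N+1$ and $f=n-N-\alpha+\beta+2$; the paper's label ``$d=2\alpha+\beta+2\zeta-2N+1$'' should read $e$.

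One correction: under the substitution in \eqref{eq:rationa-cV} you have $2b+2c=2\beta+2\zeta$. The parameters of $\widetilde\cU_m(n)$ are therefore $m-2\beta-2\zeta-1$ (upper) and $2\alpha-\beta-N$ (lower), not $m-2\beta-2\zeta+1$ and $2\alpha-\beta-2\zeta+2-N$; your values would not give a balanced series. The correct values are exactly what your one Whipple step produces, so no second transformation is needed, and the stated prefactor then follows by the Pochhammer simplifications you list.
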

\begin{proof}
The identification of $\mathcal{U}_m(n)$ in $U_m(n)$ is readily achieved by taking the scalar product of the vectors $\ket{e_m}$ and $\ket{d_n^*}$ respectively given in \eqref{eq:expression-en} and \eqref{eq:expression-dnast}. This yields
\begin{align}
U_m(n)= \frac{(-n+\alpha-\beta)_n(-N,N-2\alpha-\beta-2\zeta)_m}{n!(-\alpha)_{n+1}(m-2\beta-2\zeta-1)_m}\sum_{\ell=0}^{N} \frac{(-m,m-2\beta-2\zeta-1,-n,-\alpha)_\ell}{\ell!(-N,N-2\alpha-\beta-2\zeta,-n+\alpha-\beta)_\ell} \,,
\end{align}
from where one gets formula \eqref{eq:Un} using \eqref{eq:rational-cU}. 

Obtaining $\widetilde{U}_m(n)$ requires more algebraic transformations. From formulas \eqref{eq:expression-en-ast} and \eqref{eq:Z-dn} for $\ket{e_n^*}$ and $Z\ket{d_n}$ and the identities
\begin{align}
(a)_{\ell+1}&=a(a+1)_\ell\,,\\
(-N-a)_{N-n}&=(-1)^n(n+a+1)_{N-n}\,,\\
(n-N-a)(a)_{N-n}&=(-1)^{n+1}a(n-N-a)_{N-n}\,,
\end{align} 
one finds 
\begin{align}
\label{eq:Un-1stexp}
    \widetilde U_m(n) 
  &
    \dfrac{ (n-\alpha)(\alpha-\beta-1)_{N-n} (m+1,-2N+2\alpha+\beta+2\zeta+1)_{N-m}}{ ( n-N,n-\alpha)_{N-n}(-N-m+2\beta+2\zeta+1)_{N-m}}\nonumber\\
  & \quad \times {}_4F_3 \left({{n-N,\alpha+1-N,m-N,-N-m+2\beta+2\zeta+1}\atop
{-N,2\alpha+\beta+2\zeta-2N+1,n-N-\alpha+\beta+2}}\;\Bigg\vert \; 1\right)\,.
\end{align}
Applying \eqref{eq:Whipple} with $a=\alpha+1-N\,, b=m-N\,, c=-N-m+2\beta+2\zeta+1\,, d=2\alpha+\beta+2\zeta-2N+1$ and $f=n-N-\alpha+\beta+2$, \eqref{eq:Un-1stexp} becomes
\begin{align}
&   \dfrac{ (n-\alpha)(\alpha-\beta-1)_{N-n} (m+1,2\alpha+\beta+2\zeta+1-2N)_{N-m}(\alpha+\beta+2\zeta-N,n-2\alpha+\beta+1)_{N-n}}{ ( n-N,n-\alpha)_{N-n}(-N-m+2\beta+2\zeta+1)_{N-m}(2\alpha+\beta+2\zeta+1-2N,n-N-\alpha+\beta+2)_{N-n}}\nonumber\\
& \times {}_4F_3 \left({{n-N,\alpha+1-N,-m,m-2\beta-2\zeta-1}\atop
{-N,n-\alpha-\beta-2\zeta+1,2\alpha-\beta-N}}\;\Bigg\vert \; 1\right)\,.
\end{align}
After some simplifications and using \eqref{eq:rationa-cV}, we get 
\begin{multline}
\widetilde{U}_m(n)=\\\dfrac{ (n-\alpha) (m+1,-2N+2\alpha+\beta+2\zeta+1)_{N-m}(-N+\alpha+\beta+2\zeta,-N+2\alpha-\beta)_{N-n}}{ (n-N, n-\alpha,-2N+2\alpha+\beta+2\zeta+1)_{N-n}(-N-m+2\beta+2\zeta+1)_{N-m}} \, \widetilde \cU_m(n;\alpha,\beta,\zeta,N)\,.
\end{multline}
This concludes the proof of the proposition.
\end{proof}

As a first application of this representation theoretic approach we offer another expression for the function $\mathcal{U}_m(n)$ in terms of the dual Hahn polynomials.
This natural in the context of Leonard trios \cite{Trio2026}. The link with the dual Hahn polynomials is a direct consequence of the Hahn algebra appearing as the algebra generated by $V$ and $Z$ as we will see in the proof. The dual Hahn polynomials are defined in terms of the generalized hypergeometric functions as
\begin{align}
    R^{(dH)}_i(x;\boldsymbol\rho)={}_3F_2 \left({{-i, \;-x,\;x+\alpha+\beta+1}\atop
{\alpha+1,\;-N}}\;\Bigg\vert \; 1\right)\,,
\end{align}
where the parameter $\boldsymbol\rho=(\alpha,\beta,N).$
\begin{cor}
\label{cor:rationa-dualhahn}The function $\mathcal{U}_m(n;\alpha,\beta,\zeta,N)$ admits the following expansion in terms of the dual Hahn polynomials
\begin{equation}
    \mathcal{U}_m(n;\alpha,\beta,\zeta,N)=\frac{n!}{(\alpha-\beta-n)_n}\sum_{k=0}^n\frac{(-\alpha)_{k}(2\alpha-\beta-n)_{n-k}}{(n-k)!k!}R_k^{(dH)}(m;\boldsymbol\rho),
\end{equation}
where $R_k^{(dH)}(m;\rho)$ denotes the dual Hahn polynomials with parameters $\boldsymbol\rho=(N-2\alpha-\beta-2\zeta-1, 2\alpha-\beta-N-1,N)$.
\end{cor}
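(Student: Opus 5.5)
The plan is to insert the resolution of the identity \eqref{eq:completeness} for the $z$-basis into the overlap $U_m(n)=\bra{e_m}\ket{d_n^*}$:
\begin{equation*}
U_m(n)=\sum_{k=0}^N \bra{e_m}\ket{z^*_k}\,\bra{z_k}\ket{d_n^*}.
\end{equation*}
This splits the overlap into a $V$--$Z$ overlap and a $Z$--GEVP overlap. Since $V$ and $Z$ generate the Hahn algebra, the first factor $\bra{e_m}\ket{z_k^*}$ should be a dual Hahn polynomial in the variable $m$ of degree $k$. The second factor $\bra{z_k}\ket{d_n^*}$ should be a simple product of Pochhammer symbols, vanishing for $k>n$. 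This is consistent with $\ket{\tilde e_n}=Z\ket{d_n}$ being a lower triangular combination of the $\ket{z_\ell}$, $\ell\ge n$, and with the dual statement for $\ket{d_n^*}$ in the $z^*$ basis.

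First, compute $\bra{e_m}\ket{z_k^*}$ from \eqref{eq:expression-en} and \eqref{eq:expression-zn-ast}. Using $(-1)^{\ell}(-k)_\ell/(-k)_k=(-1)^k\binom{k}{\ell}\ell!/k!$ (up to signs), this gives
\begin{equation*}
\sum_{\ell}\frac{(-m,m-2\beta-2\zeta-1)_\ell}{\ell!(-N,N-2\alpha-\beta-2\zeta)_\ell}\frac{(-k)_\ell}{(-k)_k}(\ldots).
\end{equation*}
The sum is a ${}_3F_2$ of the form
\begin{equation*}
{}_3F_2\left({{-k,\,-m,\,m-2\beta-2\zeta-1}\atop{-N,\,N-2\alpha-\beta-2\zeta}}\;\Bigg\vert\;1\right).
\end{equation*}
This is exactly $R^{(dH)}_k(m;\boldsymbol\rho)$ with $\boldsymbol\rho=(N-2\alpha-\beta-2\zeta-1,\,2\alpha-\beta-N-1,\,N)$. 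As a check, $\hat\alpha+\hat\beta+1=-2\beta-2\zeta-1$ with $\hat\alpha+1=N-2\alpha-\beta-2\zeta$, as required.

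Second, compute $\bra{z_k}\ket{d_n^*}$ from \eqref{eq:expression-zn} and \eqref{eq:expression-dnast}. This reduces to a single sum
\begin{equation*}
\sum_\ell (-1)^\ell\frac{(k-N)_{N-\ell}(-n,-\alpha)_\ell}{(\alpha-\beta-n)_\ell}.
\end{equation*}
Since $(k-N)_{N-\ell}=0$ for $\ell<k$, the sum runs over $\ell\ge k$. I expect it to collapse, via Chu--Vandermonde, to
\begin{equation*}
\frac{(-\alpha)_k\,(2\alpha-\beta-n)_{n-k}}{(n-k)!\,k!}
\end{equation*}
times the prefactor of $\ket{d_n^*}$. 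Concretely, after shifting $\ell=k+j$ the sum becomes a ${}_2F_1(k-n,k-\alpha;k+\alpha-\beta-n;1)$, which Vandermonde evaluates to $(\alpha-\beta-n)_{n-k}$-type ratios. Rewriting these ratios produces $(2\alpha-\beta-n)_{n-k}$ and the vanishing for $k>n$.

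Finally, combine the two factors and divide by the prefactor of \eqref{eq:Un}. The factor $(-N,N-2\alpha-\beta-2\zeta)_m/(m-2\beta-2\zeta-1)_m$ cancels against the normalization of $\ket{e_m}$. The factor $(\alpha-\beta-n)_n/(n!(-\alpha)_{n+1})$ cancels against that of $\ket{d_n^*}$, leaving the stated $n!/(\alpha-\beta-n)_n$.

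The main obstacle is the second step: getting the Vandermonde evaluation and all the Pochhammer and sign bookkeeping right so that exactly $(-\alpha)_k(2\alpha-\beta-n)_{n-k}/((n-k)!k!)$ emerges. If the direct summation is messy, I will instead obtain $\bra{z_k}\ket{d_n^*}$ from a two-term recurrence in $k$. This follows from $\bra{z_k}(X^\top-\lambda_nZ^\top)\ket{d_n^*}=0$ together with the known bidiagonal action of $X$ and $Z$ on $\ket{z_k}$ (namely $X\ket{z_k}=(k-2\alpha+\beta+1)\ket{z_{k+1}}-(k-\alpha)^2\ket{z_k}$). That recurrence fixes the ratio of consecutive coefficients as $(k-\alpha)(k-n)/\bigl((k+1)(k-2\alpha+\beta+1)\bigr)$-type factors, and the initial value is then checked at $k=n$.
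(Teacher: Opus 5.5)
Your proposal follows the paper's proof essentially step for step: you insert the $z$-resolution of the identity into $U_m(n)=\langle e_m|d_n^*\rangle$, identify $\langle e_m|z_k^*\rangle=\frac{(-N,N-2\alpha-\beta-2\zeta)_m}{k!\,(m-2\beta-2\zeta-1)_m}R_k^{(dH)}(m;\boldsymbol\rho)$, reduce $\langle z_k|d_n^*\rangle$ to ${}_2F_1(k-n,k-\alpha;k+\alpha-\beta-n;1)$, sum it by Chu--Vandermonde, and divide by the prefactor in \eqref{eq:Un}. Your predicted intermediate factors are off, though: Vandermonde actually gives $c_k:=\langle z_k|d_n^*\rangle=\frac{(-\alpha)_k(2\alpha-\beta-n)_{n-k}}{(n-k)!\,(-\alpha)_{n+1}}$, because the $n!$ and $(\alpha-\beta-n)_n$ in the prefactor of \eqref{eq:expression-dnast} are absorbed by $(-1)^k(-n)_k=n!/(n-k)!$ and $(\alpha-\beta-n)_k(k+\alpha-\beta-n)_{n-k}=(\alpha-\beta-n)_n$; hence dividing by \eqref{eq:Un} cancels only $(-\alpha)_{n+1}$ on the $n$-side, which is what produces $n!/(\alpha-\beta-n)_n$, while the $1/k!$ comes from $\langle e_m|z_k^*\rangle$; and in your fallback recurrence the ratio is $c_{k+1}/c_k=(k-\alpha)(k-n)/(k-2\alpha+\beta+1)$ with no factor $k+1$, starting from $c_n=1/(n-\alpha)$.
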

 \begin{proof}
     Using the resolution of the identity \eqref{eq:completeness}, the transition coefficient $U_m(n)=\langle e_m\ket{d_n^*}$ can be rewritten as
     \begin{equation}
         U_m(n)=\sum_{k=0}^N \langle e_m\ket{z_k^*}\langle z_k\ket{d_n^*}.
     \end{equation}
     Using the explicit expressions \eqref{eq:expression-en} and \eqref{eq:expression-zn-ast} for the different vectors appearing in the RHS one finds
     \begin{equation}
     \label{eq:em-zk-dualhahn}
         \langle e_m\ket{z_k^*}=\frac{(-N,N-2\alpha-\beta-2\zeta)_m }{k!(m-2\beta-2\zeta-1)_m }R^{(dH)}_k(m;\boldsymbol\rho).
    \end{equation}
    Using \eqref{eq:expression-dnast} and \eqref{eq:expression-en}, one finds that $\langle z_k\ket{d_n^*}=0$  when $k>n$
    and otherwise one has
    \begin{equation}
   \langle z_k\ket{d_n^*} = (-1)^k\frac{(\alpha-\beta-n)_n(-\alpha)_k(-n)_k}{n!(-\alpha)_{n+1}(\alpha-\beta-n)_k}{}_2F_1 \left({{-n+k,-\alpha+k}\atop
{-n+k+\alpha-\beta}}\;\Bigg\vert \; 1\right)=\frac{(-\alpha)_k(2\alpha-\beta-n)_{n-k}}{(-\alpha)_{n+1}(n-k)!},
     \end{equation}
where we used Chu--Vandermonde's identity for the last equality. The result is then obtained from  \eqref{eq:Un}.
 \end{proof}

\subsection{Biorthogonality} A key feature of the rational functions of Racah type is their biorthogonality. In the following, we derive these biorthogonality relations from the perspective of the meta Racah algebra.
\begin{pr}
The rational functions $\cU_m(n;\alpha,\beta,\zeta,N)$ and $\widetilde \cU_m(j;\alpha,\beta,\zeta,N)$ satisfy the following biorthogonality relations:
\begin{align}
&\sum_{j=0}^N  
\cW(j)\,\widetilde\cU_m(j)\,
\cU_n(j) = h_n \delta_{n,m},\label{eq:orth-rational1}\\
&\sum_{j=0}^N  
\cW^*(j)\,\widetilde\cU_j(m)
\,\cU_j(n) = h_n^*\delta_{n,m}\label{eq:orth-rational2},
\end{align}
where
\begin{align}
& h_n = \frac{n! (N-2\beta-2\zeta)_{n}}{(-N)_n (2n-1-2\beta-2\zeta) (-2\beta-2\zeta)_{n-1}},
\\
& \cW(j) =
\frac{(-N,1-\alpha+\beta,N-2\alpha-\beta-2\zeta)_j
(2\alpha-\beta-N,\alpha+\beta-N+2\zeta)_{N-j} }{j!(-\alpha,-2\beta-2\zeta)_{N}},\\
&h_n^*=\frac{(1,1-2\alpha+\beta,1-\alpha-\beta-2\zeta)_n}{(-N,1-\alpha+\beta,N-2\alpha-\beta-2\zeta)_n},\\
& \cW^*(j) = \frac{(-N)_j}{j!}\frac{(2j-1-2\beta-2\zeta)}{(j-1-2\beta-2\zeta)_{N+1}}\frac{(1-2\alpha+\beta,1-\alpha-\beta-2\zeta)_N}{(-\alpha)_N}.
 \end{align}
\end{pr}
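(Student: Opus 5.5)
The plan is to derive both relations from the duality relations between the bases $e$, $e^*$ and $d$, $d^*$, in the same way \eqref{eq:orth-S} was obtained for the Racah polynomials. Afterwards the explicit prefactors of \eqref{eq:Un} and \eqref{eq:tildeUn} are substituted.

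For \eqref{eq:orth-rational1}, insert the resolution of the identity \eqref{eq:completeness-d}, $\sum_j Z\ket{d_j}\bra{d_j^*}=1$, into the pairing $\bra{e_m^*}\ket{e_n}=\delta_{n,m}$ from \eqref{eq:orth-e}. This gives
\begin{equation*}
\sum_{j=0}^N \bra{e_m^*}\,|\,Z\ket{d_j}\,\bra{d_j^*}\ket{e_n}=\sum_{j=0}^N \widetilde U_m(j)\,U_n(j)=\delta_{n,m}.
\end{equation*}
For \eqref{eq:orth-rational2}, insert $\sum_j\ket{e_j}\bra{e_j^*}=1$ from \eqref{eq:completeness} into the pairing $\bra{d_n^*}\,|\,Z\ket{d_m}=\delta_{n,m}$ from \eqref{eq:orth-d}. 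This gives
\begin{equation*}
\sum_{j=0}^N U_j(n)\,\widetilde U_j(m)=\delta_{n,m}.
\end{equation*}
This is the dual orthogonality, summed over the degree index.

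Next I substitute $U_n(j)=A_n(j)\,\cU_n(j)$ and $\widetilde U_m(j)=B_m(j)\,\widetilde\cU_m(j)$, with $A$ and $B$ the explicit prefactors of \eqref{eq:Un} and \eqref{eq:tildeUn}. The product $A_n(j)B_m(j)$ must be rewritten as $\cW(j)$ times a factor $c(n)c'(m)$ that depends on $n$ and $m$ separately. Dividing out, and setting $m=n$ on the right-hand side, gives $h_n=1/(c(n)c'(n))$. In the same way, for the second relation $A_j(n)B_j(m)=\cW^*(j)\,g(n)g'(m)$, which gives $h_n^*$.

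The main obstacle is this bookkeeping of Pochhammer symbols. In $B_m(j)$ the $j$-dependent factors appear with index $N-j$ and mix with the $(\cdot)_j$ factors of $A_n(j)$. I would use $(a)_{N-j}=(-1)^{j}(a)_N/(1-a-N)_j$ to convert everything into $j$-Pochhammers times $j$-independent constants, and then match the result with $\cW(j)$.

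For $\cW^*(j)$ the factor $(2j-1-2\beta-2\zeta)$ appears. It comes from the ratio $(j-2\beta-2\zeta-1)_j$ in $A_j(n)$ against $(-N-j+2\beta+2\zeta+1)_{N-j}$ in $B_j(m)$. I would rewrite this ratio with the identity
\begin{equation*}
(a)_{2j}/(a)_{2j-1}=a+2j-1
\end{equation*}
to obtain the form $(2j-1-2\beta-2\zeta)/(j-1-2\beta-2\zeta)_{N+1}$. Finally, I would check the constants against small cases such as $N=1$ or $m=n=0$, to fix signs and the overall normalizations $h_n$ and $h_n^*$.
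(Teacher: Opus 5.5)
Your proposal is correct and follows the paper's proof. Both obtain $\sum_{j}\widetilde U_m(j)U_n(j)=\delta_{n,m}$ and $\sum_j U_j(n)\widetilde U_j(m)=\delta_{n,m}$ by inserting the resolutions of the identity into the dualities $\bra{e_m^*}\ket{e_n}=\delta_{n,m}$ and $\bra{d_n^*}\,|\,Z\ket{d_m}=\delta_{n,m}$, then substitute the prefactors of $U$ and $\widetilde U$ and fix the constants by $h_0=h_0^*=1$. Your Pochhammer bookkeeping for $\cW^*$ also goes through, since $(j-1-2\beta-2\zeta)_j(-N-j+2\beta+2\zeta+1)_{N-j}=(-1)^{N-j}(j-1-2\beta-2\zeta)_{N+1}/(2j-1-2\beta-2\zeta)$ and this sign cancels against $(N-2\alpha-\beta-2\zeta)_j(1-2N+2\alpha+\beta+2\zeta)_{N-j}=(-1)^{N-j}(N-2\alpha-\beta-2\zeta)_N$.
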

\begin{proof}
As a result of \eqref{eq:orth-e}, \eqref{eq:orth-d}, \eqref{eq:completeness},  \eqref{eq:completeness-d}, the orthogonality relations between $U_{k}(n)$ and $ \widetilde{U}_{m}(n)$ read:
\begin{align}
    &\sum_{n=0}^N \widetilde U_{k}(n) U_{m}(n) = \delta_{k,m}, \\
    &\sum_{m=0}^N \widetilde U_{m}(k) U_m(n) = \delta_{k,n}.
\end{align}
Hence substituting the expressions \eqref{eq:Un} and \eqref{eq:tildeUn} for $U_m(n)$ and $\widetilde U_m(n)$ in terms of $\cU_m(n)$ and $\widetilde \cU_m(n)$ gives the formulas recorded in the above proposition. Note that the weight functions $\cW$ and $\cW^*$ are determined by requesting the normalization $h_0=h_0^*=1$, respectively. For presentation
convenience in obtaining \eqref{eq:orth-rational1}, both sides have been multiplied by $(1,-2N+2\alpha+2\zeta+\beta+1)_N/(-N+2\beta+2\zeta+1)_N$. Similarly, to get \eqref{eq:orth-rational2} both sides have been multiplied by $(-N+\alpha+\beta+2\zeta,-N+2\alpha-\beta)_N/(-N,-\alpha,-2N+2\alpha+2\zeta+\beta+1)_N.$ 
\end{proof}

\subsection{Bispectrality of the Racah rational functions}

The rational functions $\mathcal{U}_m(n)$ satisfy generalized bispectral properties. These are now obtained with the help of the actions of the generators of $m\sR$ on EVP and GEVP bases. We note that the generalized bispectral properties of $\widetilde{\mathcal{U}}_m(n)$ can be deduced from those of $\mathcal{U}_m(n)$  with the help of \eqref{eq:rationa-cV}.
\subsubsection{Recurrence relation}
\begin{pr}
The rational function $\mathcal{U}_m$ of Racah type verifies the following GEVP equation 
\begin{align}
&n\big(\mathcal{A}_m\mathcal{U}_{m+1}(n)-(\mathcal{A}_m+\mathcal{C}_m+\alpha)\mathcal{U}_m(n)+\mathcal{C}_m\mathcal{U}_{m-1}(n)\big)=\label{eq:GEVPrecu}\\
&(m+\alpha-\beta)\mathcal{A}_m\mathcal{U}_{m+1}(n)-\big((m+\alpha-\beta)\mathcal{A}_m-(m-\alpha-\beta -2\zeta-1)\mathcal{C}_m\big)\mathcal{U}_m(n)\nonumber\\&-(m-\alpha-\beta-2\zeta-1)\mathcal{C}_m\mathcal{U}_{m-1}(n)\nonumber\,,
\end{align}
where
\begin{align}
&\mathcal{A}_m=\frac{(m-N)(m+N-2\alpha-\beta-2\zeta)(m-2\beta-2\zeta-1)}{(2m-2\beta-2\zeta-1)(2m-2\beta-2\zeta)}\label{A-recurrence}\,,\\
&\mathcal{C}_m=-\frac{m(m+2\alpha-\beta-N-1)(m-2\beta-2\zeta+N-1)}{(2m-2\beta-2\zeta-2)(2m-2\beta-2\zeta-1)}\,.\label{C-recurrence}
\end{align}
\end{pr}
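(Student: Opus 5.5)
Since $U_m(n)=\bra{e_m}\ket{d_n^*}$ and the vectors $\ket{d_n^*}$ solve the GEVP $X^\top\ket{d_n^*}=\lambda_n Z^\top\ket{d_n^*}$ with $\lambda_n=\alpha-n$, the plan is to evaluate the scalar product
\begin{equation*}
\bra{e_m}\,|\,(X^\top-\lambda_n Z^\top)\ket{d_n^*}=0
\end{equation*}
in two ways. Acting on the left, $\bra{e_m}\,|\,X^\top=(X\ket{e_m})^\top$ and $\bra{e_m}\,|\,Z^\top=(Z\ket{e_m})^\top$. The tridiagonal actions \eqref{action:Zone} and \eqref{action:Xone} of Subsection~\ref{sec:repe} then give
\begin{equation*}
\sum_{k=m-1}^{m+1}\big(X^{(e)}_{k,m}-(\alpha-n)Z^{(e)}_{k,m}\big)U_k(n)=0 .
\end{equation*}
This is already a three-term relation in $m$, linear in the spectral parameter $n$, of the form $n\,(\text{tridiagonal in }Z)=(\text{tridiagonal in }\alpha Z-X)$. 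It is the GEVP recurrence in the degree variable.

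Next, pass from $U_k(n)$ to $\mathcal{U}_k(n)$ using \eqref{eq:Un}. The prefactor splits into an $n$-dependent factor, which is common to every term and cancels, and an $m$-dependent factor $c_m=(-N,N-2\alpha-\beta-2\zeta)_m/(m-2\beta-2\zeta-1)_m$. The coefficients of $\mathcal{U}_{m\pm1}$ therefore pick up the ratios $c_{m\pm1}/c_m$, which are elementary rational functions of $m$. Concretely,
\begin{equation*}
\frac{c_{m+1}}{c_m}=\frac{(m-N)(m+N-2\alpha-\beta-2\zeta)(m-2\beta-2\zeta-1)}{(2m-2\beta-2\zeta-1)(2m-2\beta-2\zeta)} ,
\end{equation*}
which is exactly $\mathcal{A}_m$. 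Since $Z^{(e)}_{m+1,m}=1$, the $Z$-part of the $\mathcal{U}_{m+1}$ coefficient becomes $\mathcal{A}_m$. Likewise, $Z^{(e)}_{m-1,m}\,c_{m-1}/c_m$ should reduce to $\mathcal{C}_m$, and I would check this by direct cancellation with \eqref{action:Zone:coe3}. For the $X$-parts, I would use $X^{(e)}_{m+1,m}=\beta-m$ and $X^{(e)}_{m-1,m}=(m-\beta-2\zeta-1)Z^{(e)}_{m-1,m}$. With these, the $\mathcal{U}_{m\pm1}$ coefficients take exactly the shape in \eqref{eq:GEVPrecu}, after rewriting $(\alpha-n)\cdot 1-(\beta-m)$ and $\alpha-(m-\beta-2\zeta-1)$ against $n$ and collecting terms.

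The main obstacle is the diagonal coefficient. One has to show that $X^{(e)}_{m,m}-(\alpha-n)Z^{(e)}_{m,m}$ equals, after sign normalization, $-n(\mathcal{A}_m+\mathcal{C}_m+\alpha)$ plus the stated $n$-independent part. From \eqref{action:Zone:coe2} one reads off $Z^{(e)}_{m,m}=-\mathcal{C}_m-\mathcal{A}_m-\alpha$; this identification must be verified by matching the two fractions against $\mathcal{A}_m$ and $\mathcal{C}_m$. That makes the $n$-dependent part immediate. Similarly, the two fractions in \eqref{action:Xone:coe2} are $-\mathcal{C}_m(m-\beta-2\zeta-1)$ and $-\mathcal{A}_m(m-\beta)$, minus $\alpha^2$. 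The remaining $n$-independent diagonal term therefore simplifies by pure algebra, the $\alpha^2$ cancelling against $\alpha\cdot\alpha$, to $(m+\alpha-\beta)\mathcal{A}_m-(m-\alpha-\beta-2\zeta-1)\mathcal{C}_m$ up to the overall sign. I expect this bookkeeping of signs to be the only delicate step. Finally, the boundary cases $m=0$ and $m=N$ are covered by the conventions $\ket{e_{-1}}=\ket{e_{N+1}}=0$, together with $\mathcal{C}_0=0$ and $\mathcal{A}_N=0$.
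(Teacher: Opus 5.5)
Your route is exactly the paper's proof: evaluate $\langle e_m|(X^\top-\lambda_nZ^\top)|d_n^*\rangle=0$ using the tridiagonal actions of $X$ and $Z$ on the $e$-basis, then divide by the $m$-dependent prefactor $c_m$. The identifications $c_{m+1}/c_m=\mathcal{A}_m$ and $Z^{(e)}_{m-1,m}\,c_{m-1}/c_m=\mathcal{C}_m$ are both correct.

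There is one sign slip to fix. The second fraction in \eqref{action:Xone:coe2} is $+(m-\beta)\mathcal{A}_m$, not $-(m-\beta)\mathcal{A}_m$. Unlike its counterpart in \eqref{action:Zone:coe2}, it enters with a plus sign. Hence $X^{(e)}_{m,m}=(m-\beta)\mathcal{A}_m-(m-\beta-2\zeta-1)\mathcal{C}_m-\alpha^2$, as in the paper.

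With your sign, the $n$-independent diagonal term would be $(\alpha-m+\beta)\mathcal{A}_m-(m-\alpha-\beta-2\zeta-1)\mathcal{C}_m$. No overall sign change turns this into the required expression, so your chain as written does not reach the statement. With the correct sign one gets
\[
X^{(e)}_{m,m}-(\alpha-n)Z^{(e)}_{m,m}=(m+\alpha-\beta)\mathcal{A}_m-(m-\alpha-\beta-2\zeta-1)\mathcal{C}_m-n(\mathcal{A}_m+\mathcal{C}_m+\alpha),
\]
and \eqref{eq:GEVPrecu} then follows exactly.
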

\begin{proof}
Recalling that $\ket{d_n^*}$ are solutions of the GEVP \eqref{eq:GEVPds}, one gets:
\begin{equation}
    \bra{e_m}\,\,|X^{\top} - \lambda_n Z^{\top}\ket{ d_n^*}=0\,.
\end{equation}
The elements $X$ and $Z$ act tridiagonally on the basis $\ket{e_n}$ (see subsection \ref{sec:repe}) which imply
\begin{align}
\label{eq:recurrence-U-1}
&X^{(e)}_{m+1,m}U_{m+1}(n) +X^{(e)}_{m,m} U_{m}(n)+X^{(e)}_{m-1,m} U_{m-1}(n)
\nonumber\\
&=(\alpha-n)\left(Z^{(e)}_{m+1,m}U_{m+1}(n) +Z^{(e)}_{m,m} U_{m}(n)+Z^{(e)}_{m-1,m} U_{m-1}(n)\right)\,.
\end{align}
Inserting into \eqref{eq:recurrence-U-1} the expression \eqref{eq:Un} of $U_m(n)$ in terms of $\mathcal{U}_m(n)$ and using \eqref{action:Xone:coe1} and \eqref{action:Xone:coe3}, one finds
\begin{align}
&(\beta-\alpha-m+n)Z_{m+1,m}^{(e)}\frac{(-N,N-2\alpha-\beta-2\zeta)_{m+1}}{(m-2\beta-2\zeta)_{m+1}}\mathcal{U}_{m+1}(n)\nonumber\\
&+(X_{m,m}^{(e)}-(\alpha-n)Z_{m,m}^{(e)})\frac{(-N,N-2\alpha-\beta-2\zeta)_{m}}{(m-2\beta-2\zeta-1)_{m}}\mathcal{U}_m(n)\nonumber\\
&+(m+n-\alpha-\beta-2\zeta-1)Z_{m-1,m}^{(e)}\frac{(-N,N-2\alpha-\beta-2\zeta)_{m-1}}{(m-2\beta-2\zeta-2)_{m-1}}\mathcal{U}_{m-1}(n)=0\,.
\end{align}
Multiplying the previous equation by $\frac{(m-2\beta-2\zeta-1)_m}{(-N,N-2\alpha-2\zeta-\beta)_m}$, calling upon the expressions \eqref{action:Zone:coe1} and \eqref{action:Zone:coe3}, and noticing that from \eqref{action:Zone:coe2} and \eqref{action:Xone:coe2} one gets 
\begin{align}
   &Z_{m,m}^{(e)}=-\mathcal{A}_m-\mathcal{C}_m-\alpha\,,\\
    &X_{m,m}^{(e)}=(m-\beta)\mathcal{A}_m-(m-\beta-2\zeta-1)\mathcal{C}_m-\alpha^2\,,
\end{align}
the previous relation becomes the GEVP \eqref{eq:GEVPrecu}.
\end{proof}

\subsubsection{Difference equation}
\begin{pr}
    The rational function $\mathcal{U}_m$ of Racah type obeys the following difference equation
\begin{align}
\label{eq:difference-U}
&\mathcal{B}_n\mathcal{U}_m(n+1)-(\mathcal{B}_n+\mathcal{D}_n)\mathcal{U}_m(n)+\mathcal{D}_n\mathcal{U}_m(n-1)\nonumber\\
&=m(2\beta+2\zeta+1-m)\left((n-\alpha)\mathcal{U}_m(n)- \frac{n(n-2\alpha+\beta)}{n-\alpha+\beta}\mathcal{U}_m(n-1)\right)\,,
\end{align}
where 
\begin{align}
    \mathcal{B}_n=(n-N)(n+N-2\alpha-\beta-2\zeta)(n-\alpha+\beta+1)\,,
    \mathcal{D}_n=n(n-2\alpha+\beta)(n-\alpha-\beta-2\zeta-1).
\end{align}
\end{pr}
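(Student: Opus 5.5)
We obtain the difference equation by evaluating one matrix element in two ways. We use the eigenvalue equation $V\ket{e_m}=\mu_m\ket{e_m}$ on the $e$ side and the tridiagonal action of $V^\top Z^\top$ on the $d^*$ basis (Subsection~\ref{ssec:dds}) on the other. The natural object is
\begin{equation*}
\bra{e_m}\,|\,V^\top Z^\top\ket{d_n^*} .
\end{equation*}

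Acting to the right, we use
\[
V^\top Z^\top\ket{d_n^*}=(V^\top Z^\top)^{(d*)}_{n+1,n}\ket{d^*_{n+1}}+(V^\top Z^\top)^{(d*)}_{n,n}\ket{d^*_n}+(V^\top Z^\top)^{(d*)}_{n-1,n}\ket{d^*_{n-1}},
\]
which gives a three-term combination of $U_m(n+1),U_m(n),U_m(n-1)$ with the coefficients \eqref{action:VZ-coeff1}--\eqref{action:VZ-coeff3}. Acting to the left, we write $\bra{e_m}V^\top=(V\ket{e_m})^\top=\mu_m\bra{e_m}$, so the element equals $\mu_m\bra{e_m}\,|\,Z^\top\ket{d_n^*}$. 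By \eqref{action:Zt-d}, $Z^\top\ket{d_n^*}=(n-\alpha)\ket{d_n^*}+\frac{n-2\alpha+\beta}{n-\alpha}\ket{d^*_{n-1}}$. This yields $\mu_m\big((n-\alpha)U_m(n)+\frac{n-2\alpha+\beta}{n-\alpha}U_m(n-1)\big)$. Equating the two sides gives a relation of exactly the shape of \eqref{eq:difference-U}: a three-term left side and a two-term right side carrying $\mu_m$. The constant shift $\mu_m-\mu_0$ versus $m(2\beta+2\zeta+1-m)$ is handled by absorbing $\mu_0$ times the $Z^\top$ relation. Indeed $\mu_m=-(\beta+\zeta)(\beta+\zeta+1)+m(2\beta+2\zeta+1-m)$, so the $(\beta+\zeta)(\beta+\zeta+1)$ part should cancel against pieces of the diagonal coefficient \eqref{eq:action-VZcoeff2}.

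Next we substitute \eqref{eq:Un}, i.e. $U_m(n)=c_m\,\kappa_n\,\cU_m(n)$ with $\kappa_n=\frac{(\alpha-\beta-n)_n}{n!(-\alpha)_{n+1}}$. The $m$-dependent factor $c_m$ cancels throughout. We then divide by $\kappa_n$ and simplify the ratios
\[
\frac{\kappa_{n\pm1}}{\kappa_n}, \qquad \frac{\kappa_{n+1}}{\kappa_n}=\frac{(\alpha-\beta-n-1)}{(n+1)(n-\alpha+1)}\cdot\frac{(\alpha-\beta-n)_{n}\text{-shift}}{\cdots},
\]
computed from elementary Pochhammer identities. After this, the coefficient of $\cU_m(n+1)$ should reduce to $\mathcal{B}_n$ (up to an overall sign or normalization) and that of $\cU_m(n-1)$ to $\mathcal{D}_n$. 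The right-hand $U_m(n-1)$ term should produce the factor $\frac{n(n-2\alpha+\beta)}{n-\alpha+\beta}$.

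The main obstacle is the diagonal term. We must show that $(VZ)^{(d)}_{n,n}$, after subtracting the $\mu_0(n-\alpha)$ contribution, equals $-(\mathcal{B}_n+\mathcal{D}_n)$ in the normalization fixed by the off-diagonal terms. This is a polynomial identity in $n$. We will check it by comparing degrees and a few values of $n$, such as $n=0$ and $n=N$, where $\mathcal{D}_0=0$ and $\mathcal{B}_N=0$. An alternative sanity check comes from $m=0$, where $\cU_0\equiv1$: there the left side must vanish identically. This forces the diagonal coefficient to be $-(\mathcal{B}_n+\mathcal{D}_n)$ once the right-hand side is written with $m(2\beta+2\zeta+1-m)$, which vanishes at $m=0$. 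If a mismatch in the $(n-\alpha)$-type terms appears, it is fixed by correctly combining the $\mu_0$ part with the $Z^\top$ relation, using that $\ket{d_n^*}$ solves \eqref{eq:GEVPds}.
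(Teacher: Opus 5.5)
Your proposal is correct and is essentially the paper's own proof: evaluate $\bra{e_m}\,|\,V^{\top}Z^{\top}\ket{d_n^*}$ once through the tridiagonal action of $V^\top Z^\top$ and once through the bidiagonal action of $Z^\top$ on the $d^*$ basis together with $V\ket{e_m}=\mu_m\ket{e_m}$, then substitute $U_m(n)$ in terms of $\mathcal{U}_m(n)$ and clear the factor $(\alpha-\beta-n)_n/(n!(-\alpha)_{n+1})$. One precision: moving $\mu_0=-(\beta+\zeta)(\beta+\zeta+1)$ to the left also turns the $\mathcal{U}_m(n-1)$ coefficient into $\mathcal{D}_n$, via $(n-\alpha-\zeta)(n-\alpha-\zeta-1)-(\beta+\zeta)(\beta+\zeta+1)=(n-\alpha+\beta)(n-\alpha-\beta-2\zeta-1)$, not only the diagonal coefficient; your $m=0$ observation ($\mathcal{U}_0\equiv 1$) then validly forces the diagonal coefficient to equal $-(\mathcal{B}_n+\mathcal{D}_n)$.
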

\begin{proof}
The EVP \eqref{eq:Ven} $V\ket{e_m}=\mu_m\ket{e_m}$ leads to 
\begin{equation}
    \bra{e_m}\,\,|(V^{\top} - \mu_m) Z^{\top}\ket{  d_n^*}=0\,.
\end{equation}
From fact that $V^{\top}Z^\top$ and $Z^{\top}$ act tridiagonally on  $\ket{d_n^*}$ (see subsection \ref{ssec:dds}), one finds
\begin{align}
\label{eq:difference-eq-U-1}
&
(V^{\top}Z^{\top})^{(d*)}_{n+1,n} U_{m}(n+1)
+(V^{\top}Z^{\top})^{(d*)}_{n,n}U_{m}(n)
+(V^{\top}Z^{\top})^{(d*)}_{n-1,n} U_m(n-1)\nonumber\\
& =\mu_m\left(Z_{n,n}^{\top(d^*)}U_m(n)+Z_{n-1,n}^{\top(d^*)}U_m(n-1)\right)\,.
\end{align}
By substituting into \eqref{eq:difference-eq-U-1} the expression \eqref{eq:Un} of $U_m(n)$ in terms of $\mathcal{U}_m(n)$, using \eqref{action:Zt-d}, \eqref{action:VZ-coeff1}, \eqref{action:VZ-coeff2}, and \eqref{action:VZ-coeff3}, and multiplying the resulting expression by $\frac{n!(-\alpha)_{n+1}}{(\alpha-\beta-n)_n}$, one proves the formula given in the proposition.
\end{proof}

\subsection{Contiguity relations}
We finally provide contiguity relations for the functions $\mathcal{U}_m(n)$ using again the representation theory of $m\sR$.
\begin{pr}
The rational functions $\mathcal{U}_m$ satisfy the following contiguity relations
\begin{align}
\mathcal{U}_m(n;\alpha-1,\beta-2,\zeta+2,N)&=\frac{(n-\alpha)(n-\alpha+\beta)}{\alpha (\alpha-\beta)}\mathcal{U}_m(n;\alpha,\beta,\zeta,N)\nonumber\\&+\frac{n(n-2\alpha+\beta)}{\alpha (\beta-\alpha)}\mathcal{U}_m(n-1;\alpha,\beta,\zeta,N)\,.
\end{align}
\end{pr}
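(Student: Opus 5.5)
The plan is to exploit the fact that the parameter shift $(\alpha,\beta,\zeta)\mapsto(\alpha-1,\beta-2,\zeta+2)$ leaves the EVP basis $\ket{e_m}$ unchanged. Only the GEVP basis $\ket{d_n^*}$ changes. Indeed, by \eqref{eq:expression-en}, $\ket{e_m}$ depends on the parameters only through the combinations $\beta+\zeta$ and $N-2\alpha-\beta-2\zeta$. Both are invariant: $(\beta-2)+(\zeta+2)=\beta+\zeta$ and $N-2(\alpha-1)-(\beta-2)-2(\zeta+2)=N-2\alpha-\beta-2\zeta$. In terms of $\cU_m(n)$, the parts $(-m,m-2\beta-2\zeta-1)_\ell/(\ell!\,(-N,N-2\alpha-\beta-2\zeta)_\ell)$ of the summand in \eqref{eq:rational-cU} are therefore common to both sides of the claimed relation. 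Only the factor $(-n,-\alpha)_\ell/(\alpha-\beta-n)_\ell$, which comes from $\ket{d_n^*}$ in \eqref{eq:expression-dnast}, is affected, and it becomes $(-n,1-\alpha)_\ell/(\alpha-\beta-n+1)_\ell$. Representation-theoretically, the contiguity relation should express $\ket{d_n^*}$ of the shifted representation as a combination of $\ket{d_n^*}$ and $\ket{d_{n-1}^*}$ of the original one. Pairing with the common vector $\bra{e_m}$ then gives the relation for $U_m$, and hence for $\cU_m$ via \eqref{eq:Un}.

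Concretely, I would prove, for every $\ell=0,\dots,N$, the termwise identity
\begin{equation*}
\frac{(-n)_\ell(1-\alpha)_\ell}{(\alpha-\beta-n+1)_\ell}
=\frac{(n-\alpha)(n-\alpha+\beta)}{\alpha(\alpha-\beta)}\frac{(-n)_\ell(-\alpha)_\ell}{(\alpha-\beta-n)_\ell}
+\frac{n(n-2\alpha+\beta)}{\alpha(\beta-\alpha)}\frac{(1-n)_\ell(-\alpha)_\ell}{(\alpha-\beta-n+1)_\ell}.
\end{equation*}
Multiplying this by the common factor and summing over $\ell$ gives the proposition directly, since the coefficients do not depend on $\ell$. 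To verify the identity, I would divide by $(-\alpha)_\ell(1-n)_{\ell-1}/(\alpha-\beta-n+1)_{\ell}$, treating $\ell=0$ separately. This uses the relations $(1-\alpha)_\ell=(-\alpha)_\ell(\ell-\alpha)/(-\alpha)$, $(\alpha-\beta-n)_\ell=(\alpha-\beta-n)(\alpha-\beta-n+1)_{\ell-1}$ and $(-n)_\ell=(-n)(1-n)_{\ell-1}$. Each term then becomes a polynomial in $\ell$ of degree at most one, up to the factor $(\alpha-\beta-n+\ell)$ coming from $(\alpha-\beta-n+1)_\ell$ versus $(\alpha-\beta-n+1)_{\ell-1}$. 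The identity therefore reduces to matching two coefficients in $\ell$, which should be a short computation.

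I would treat the case $\ell=0$ as the normalization check: both sides reduce to $1$ if $\frac{(n-\alpha)(n-\alpha+\beta)-n(n-2\alpha+\beta)}{\alpha(\alpha-\beta)}=1$. Expanding the numerator gives $\alpha^2-\alpha\beta$, so this holds.

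The main obstacle is purely bookkeeping: getting the signs and the shifts of the Pochhammer symbols right in the linear-in-$\ell$ reduction. If the direct termwise check becomes messy, a fallback is to derive the vector identity
\begin{equation*}
\ket{d_n^*}' \propto \ket{d_n^*}+c_n\ket{d_{n-1}^*}
\end{equation*}
from the GEVP \eqref{eq:GEVPds} in the two representations, using the bidiagonal actions \eqref{action:Zt-d}. The constants can then be fixed by comparing the coefficients of $\ket{0}$ and $\ket{n}$ in the standard basis.
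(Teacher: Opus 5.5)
Your proof is correct, and it takes a more elementary route than the paper's.

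\textbf{Your termwise identity holds.} Factor out $(-\alpha)_\ell(1-n)_{\ell-1}/(\alpha-\beta-n+1)_\ell$ from all three terms. Factor rather than divide, since this quantity vanishes for $\ell>n\ge 1$; all three terms vanish there anyway. Up to the common factor $n/(\alpha(\alpha-\beta))$, the identity then reduces to
\begin{equation*}
(\ell-\alpha)(\alpha-\beta)=(n-\alpha)(\alpha-\beta-n+\ell)-(n-2\alpha+\beta)(\ell-n).
\end{equation*}
The $\ell$-coefficients are both $\alpha-\beta$ and the constant terms are both $-\alpha(\alpha-\beta)$. Together with your $\ell=0$ check, summing over $\ell$ proves the proposition.

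\textbf{The paper's route.} The paper works with vectors rather than series terms. From the defining relation \eqref{mRA1} and the GEVP for $\ket{d_n^*}$, it shows that $Z^\top\ket{d_n^*}$ solves the GEVP of the shifted representation. This works because $X-2Z-I$ and $Z+I$ are exactly $X$ and $Z$ with $(\alpha,\beta,\zeta)\to(\alpha-1,\beta-2,\zeta+2)$, while $V$, and hence $\ket{e_m}$, is unchanged. Fixing the normalization gives $Z^\top\ket{d_n^*}=(n-\alpha+1)$ times the shifted $\ket{d_n^*}$. Pairing with $\bra{e_m}$ and using the bidiagonal action \eqref{action:Zt-d} gives the relation for $U_m$, and \eqref{eq:Un} converts it to $\mathcal{U}_m$.

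\textbf{Comparison.} Your termwise identity is exactly the $\bra{\ell}$-component of that vector identity, with the $\ell$-independent normalizations from \eqref{eq:expression-dnast} and \eqref{eq:Un} removed. So you check by hand what the paper derives from the algebra plus uniqueness of the GEVP solution.
\begin{itemize}
\item Your version is shorter and self-contained.
\item It is also slightly stronger: it does not depend on the spectator parameters $-m$, $m-2\beta-2\zeta-1$, $-N$, $N-2\alpha-\beta-2\zeta$. The invariance of $\ket{e_m}$ therefore only serves as motivation.
\item The paper's version explains where the particular shift and the coefficients come from, whereas you take them as given.
\end{itemize}
Your fallback is essentially the paper's argument. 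However, you leave its key step unstated: using \eqref{mRA1} to see that $Z^\top$ maps the $d^*$ basis to the shifted one.
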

\begin{proof}
The defining relation \eqref{mRA1} of the meta Racah algebra implies that
\begin{align}
   (X^\top Z^\top-Z^\top X^\top-(Z^\top)^2-X^\top)\ket{d_n^*} = 0\,,
\end{align}
which, with GEVP \eqref{eq:GEVPds}, leads to
\begin{align}
    \big(X^\top-2Z^\top-I-(\lambda_n-1) (Z^\top+I)\big)Z^\top \ket{d_n^*} = 0.
\end{align}
Using definitions \eqref{eq:Zf} and \eqref{eq:Xf} of $Z$ and $X$ respectevely, one gets
\begin{align}
 &X-2Z-I = X\big|_{\alpha\to\alpha-1,\beta\to\beta-2,\zeta\to\zeta+2}\,,\quad
 Z+I = Z\big|_{\alpha\to\alpha-1,\beta\to\beta-2,\zeta\to\zeta+2}
 \,.
\end{align}
The last two relations imply
\begin{align}
    &(X^\top-\lambda_n Z^\top)\,\big|_{\alpha\to\alpha-1,\beta\to\beta-2,\zeta\to\zeta+2}\  Z^\top \ket{d_n^*}=0.
\end{align}
Therefore, $Z^\top \ket{d_n^*}$ is proportional to the vectors that are solutions of the GEVP \eqref{eq:GEVPds} when the parameters are shifted according to $\alpha\to\alpha-1,\beta\to\beta-2,\zeta\to\zeta+2$. The proportionality factor is computed by comparing the coefficients of $\ket{d_n^*}$ and  $Z^\top \ket{d_n^*}$ in front of $\ket{n}$. One obtains:
\begin{align}
 \frac{1}{n-\alpha+1}   Z^\top \ket{d_n^*}=\ket{d_n^*}\big|_{\alpha\to\alpha-1,\beta\to\beta-2,\zeta\to\zeta+2}.
\end{align}
Moreover the action of $V$ is not affected by these shifts and, by consequence, its eigenvectors $\ket{e_n}$ are unchanged. One deduces that 
\begin{align}
    \bra{e_m}|Z^\top \ket{d_n^*}=(n-\alpha+1)U_m(n;\alpha-1,\beta-2,\zeta+2).
\end{align}
Using the actions of $Z^\top$ on $\ket{d_n^*}$ computed in subsection \ref{ssec:dds}, one deduces that
\begin{align}
 (n-\alpha+1)U_m(n;\alpha-1,\beta-2,\zeta+2)
=(n-\alpha) U_{m}(n;\alpha,\beta,\zeta) + \dfrac{n-2\alpha+\beta}{n-\alpha}U_{m}(n-1;\alpha,\beta,\zeta).
\end{align}
Inserting the expression \eqref{eq:Un} of $U_m(n)$ in terms of $\mathcal{U}_m(n)$, one proves the result of the proposition.
\end{proof}
\begin{remark}
The function $\mathcal{U}_m(n)$ defined in \eqref{eq:rational-cU} can be obtained by setting $s=q^{N-a}$, $\beta=q^{2a+b+2c-2N}$ and $\delta=q^{N-2b-2c-1}$ and then taking the limit $q \to 1$ of the function
\begin{equation}\label{eq:R1}
{\mathscr{R}}^{(1)}_m(n)={}_{4}\phi_3 \left({{ q^{-m},\;\delta q^{m-N},\;q^{-n},\;sq^{-N} }\atop
{q^{-N} ,\; q^{-N}/\beta,\; \beta \delta sq^{1-n}}}\;\Bigg\vert \; q;q\right)\,,
\end{equation}
introduced in \cite{GM}. 
Moreover, in the same article the function \eqref{eq:R1} itself is shown to be obtained as a limit of the Wilson rational functions.
\end{remark}
\section{Differential relization of $m\sR$ \label{sec:models}}
This section presents an explicit differential realization of the meta Racah algebra.
Consider the following differential operators (we use the same symbols for the abstract generators
and their realizations):
\begin{align}
Z &= x(1-x)\frac{d}{dx}+(Nx-\alpha) I,\\
V &= x(1-x)\frac{d^2}{dx^2}+\left(2x(\beta+\zeta)  + N-2\alpha-\beta- 2\zeta\right)\frac{d}{dx} -(\beta+\zeta)(\beta+\zeta+1 ) I,\\
X &= -x^2(1-x)\frac{d^2}{dx^2} -x((N+\beta-1)x-2\alpha+1)\frac{d}{dx} + (N \beta x-\alpha^2) I.
\end{align}
A direct computation shows that these operators realize the defining relations \eqref{mRA1}–\eqref{mRA3} of the meta Racah algebra, with the parameters $\xi$ and $\eta$ given by \eqref{eq:xi2} and \eqref{eq:eta2}. As already observed in \cite{GIVZTridiag,LabrietPoulain}, the operators $Z$ and $V$ generate a Hahn algebra (see Section~\ref{sec:metaR}). We take these operators as acting on $\sV_N$, the space of polynomials of degree at most $N$, and consider the monomial basis $g_n(x)=(-1)^n(-N)_n x^n$, $n=0,\ldots, N.$  The action of the operators on this basis reads
\begin{align}
     {Z}g_n(x) &= (n-\alpha) g_n(x) +  g_{n+1}(x)\,, \qquad {Z} g_N(x)= (N-\alpha) g_N(x)\,, \\
     {V}g_n(x) &= (n-\beta-\zeta-1)(\beta+\zeta-n)g_n(x)+n(N+1-n)(n-1-2\alpha-\beta-2\zeta+N)g_{n-1}(x)\,,\\
     {X}g_n(x) &= -(n-{\alpha})^2 g_n(x) - (n-{\beta}) g_{n+1}(x)\,,\qquad Vg_N(x)=-(N-\alpha)^2 g_N(x)\,.
\end{align} 
Comparing with the definitions \eqref{eq:Zf}-\eqref{eq:Xf} of the abstract representation, we conclude that the functions $g_n$ model the vectors $\ket{n}$ of the standard basis.

We realize the bilinear pairing $\langle f, g\rangle$ by the contour integral  
\begin{equation}
    \langle f, g \rangle=\frac{1}{2\pi i}\oint_\Gamma f(x) g(x) dx\,,
\end{equation}
where $\Gamma$ is the circle $|x|<a<1$. Notice that this is not an inner product on $\sV_N$ but this pairing identifies $(\sV_N)^*$ with the vector space generated by $\,\{\,x^{-n-1}:n=0,\ldots,N\,\}$.
Defining $g_n^*(x)=(-1)^{n}x^{-n-1}/(-N)_{n}$ for  $n=0,\ldots,N$ one finds 
$\langle g_m^*, g_n\rangle=\delta_{n,m}\,.$ Thus the functions $g_n^*$ realize the vectors $\bra{n}\,|$ in this identification. We use this pairing to compute the transpose operators for $Z,\ X,\ V$ and one can then check that on the basis $g_n^*$ they are given by the formulas $\eqref{eq:ZT-standard}-\eqref{eq:XT-standard}$ which read
\begin{align}
Z^\top g_n^*(x)&=(n-\alpha)g_n^*(x)+g_{n-1}^*(x)\,,\quad Z^\top g_0^*(x)=(-\alpha)g_0^*(x)\,,\\
 V^{\top}g_n^*(x) &= (n-\beta-\zeta-1)(\beta+\zeta-n)g_n^*(x)+(n+1)(N-n)(n-2\alpha-\beta-2\zeta+N)g_{n+1}^*(x)\,,\\
          X^{\top}g_n^*(x) &= -(n-\alpha)^2 g_n^*(x) - (n-1-\beta) g_{n-1}^*(x)\,,\quad X^\top g_0^*(x)=-\alpha^2g_0^*(x)\,.
\end{align}
Integration by parts yields the following differential expressions for $Z^\top$, $V^\top$, and $X^\top$
\begin{align}
Z^\top &= -x(1-x)\frac{d}{dx}+(x(N+2)-\alpha-1) I,\\
V^\top &= x(1-x)\frac{d^2}{dx^2}-(2x(\beta+\zeta+2)+N-2\alpha-\beta-2\zeta-2
)\frac{d}{dx}-(\beta+\zeta+1)(\beta+\zeta+2)I\,,\\
X^\top &=-x^2(1-x) \frac{d^2}{dx^2} +x\left(\left(N +\beta +5\right) x -2 \alpha -3\right)
\frac{d}{dx} +((\beta+2)(N+2)x-(\alpha+1)^2) I.
\end{align}

\begin{remark}\label{rem:QuotientSpace}
    Notice that these differential operators  coincide with the action on the basis $g_n^*$ except for the action of $V^\top$ on $g_N^*$ and the actions of $X^\top,\ Z^\top$ on $g_0^*$. This is solved as follows. The differential realization for $X,\ Z,\ V$ can be seen to act on the space of Laurent series, and the subspace $\sV_N$ is then an irreducible subrepresentation. This implies that the transpose operators will act on a quotient of the space of  Laurent series which will be isomorphic to $(\sV_N)^*$. In other words one has to impose $g_{N+1}^*=g_{-1}^*=0$.
\end{remark}
Given the above realization, explicit models for the bases $d, d^*, e, e^*, f, f^*, z$ and $z^*$ can be constructed. These bases can be constructed either by solving the associated differential equations or by using the results of Section \ref{sec:eigenbases}. 

We begin by observing that the operator $V$ coincides with the Jacobi differential operator on the interval $(0,1)$. Consequently, in this model the basis $e$ is expressed in terms of Jacobi polynomials with parameters $ a=N-2\alpha-\beta-2\zeta-1$ and $b=2\alpha-\beta-N-1$:
\begin{align}
e_n(x)
&=\frac{(-N,N-2\alpha-\beta-2\zeta)_n}{(n-2\beta-2\zeta-1)_n} \hg{2}{1}\argu{-n,n-2\beta-2\zeta-1}{N-2\alpha-\beta-2\zeta}{x}\\
&=\frac{n!(-N)_n}{(n-2\beta-2\zeta-1)_n} J_n^{(N-2\alpha-\beta-2\zeta-1,2\alpha-\beta-N-1)}(x)\,. 
\end{align}
Recall the definition of the Jacobi polynomials on $(0,1):$
\begin{equation}
J_n^{(a,b)}(x)=\frac{(a+1)_n}{n!}\hg{2}{1}\argu{-n,n+a+b+1}{a+1}{x}\,.
\end{equation}
The bases $d$, $f$ and $z$ are given by 
 \begin{align}
 d_n(x)
 &=(-1)^n(-N)_nx^n\hg{2}{1}\left({{n-N,\alpha-\beta}\atop
{-\alpha+n+1}}\;\Bigg\vert \; x\right)\,,\\
   f_n(x) 
   &=(-1)^n(-N)_n x^n\hg{2}{1}\argu{n-N, n-\beta-\rho}{2n-2\alpha-\rho+1}{x}\,,\\
   z_n(x)&=(-1)^n(-N)_n x^{n}\left(1-x \right)^{N -n}\,.
   \end{align}
 The action of the operator $Z$ on $d_n$ is given by
\begin{align*}
    Zd_n(x) &=(n-\alpha)(-1)^n(-N)_n x^n \hg{2}{1}\argu{-N+n, \alpha-\beta-1}{-\alpha+n}{x}\,.
\end{align*}
The dual bases on the span of $ \{\,x^{-n-1}:n=0,\ldots,N\,\}$ are given by 
\begin{align}
     d_n^*(x) &=x^{-n-1}\frac{(-1)^{n+1}}{(-N)_n}\sum_{\ell=0}^n \frac{(\beta-\alpha+1,1+N-n)_\ell}{\ell! (\alpha-n)_{\ell+1}} x^\ell\,.\\
 e_n^*(x)  &=x^{-n-1}\frac{(-1)^n}{(-N)_n}\sum_{\ell=0}^{N-n}\frac{(n+1,N+n-2\alpha-\beta-2\zeta)_\ell}{\ell!(2n-2\beta-2\zeta)_\ell}\frac{1}{x^\ell}\,.\\
  f_n^*(x)&=x^{-n-1}\frac{(-1)^n}{(-N)_n}\sum_{\ell=0}^n \frac{(\beta+\rho+1-n,1+N-n)_\ell}{\ell!(1+2\alpha+\rho-2n)_\ell}x^\ell\,,\\
  z^*_n(x)&=x^{-n-1}\frac{(-1)^n}{(-N)_n}\sum_{\ell=0}^n \frac{(-1)^l(1+N-n)_l}{l!}x^\ell \,.
    \end{align}
\begin{remark} 
In the spirit of Remark \ref{rem:QuotientSpace} these eigenvectors coincide with the following function on the quotient space of Laurent series
\begin{align}
    d_n^*(x)
    &\equiv \frac{(-1)^n}{(-N)_n(n-\alpha)} x^{-n-1}\hg{2}{1}\argu{\beta-\alpha+1,1+N-n}{\alpha-n+1}{x}\,, \\
    e_n^*(x) 
    &\equiv \frac{(-1)^n}{(-N)_n}x^{-n-1}\hg{2}{1}\argu{n+1, N+n-2\alpha-\beta-2\zeta}{2n-2\beta-2\zeta}{\frac{1}{x}}\,,\\
    f_n^*(x)
    &\equiv \frac{(-1)^n}{(-N)_n}x^{-n-1} \hg{2}{1}\argu{\beta+\rho+1-n,1+N-n}{1+2\alpha+\rho-2n}{x}\,,\\
    z^*_n(x)&\equiv \frac{(-1)^n}{(-N)_n} x^{-n -1}(1-x)^{n -1-N}\,.
\end{align}
An important remark for our purpose is that if $f\equiv g$ then $\langle f, h\rangle=\langle g, h\rangle$ for all $h\in \sV_N$. This will be used in the proof of the following Proposition.
\end{remark}
    
The following proposition can be obtained as a consequence of the orthogonality relations of Section \ref{sec:eigenbases} but we recover them here using residue computations. 
\begin{pr}
\label{eq:orth-model1}
The following orthogonality relations hold: 
\begin{align}
    &\langle f_m^*, f_n\rangle =\langle f_n, f_m^*\rangle =\delta_{n,m}\,,\\
     &\langle e_m^*, e_n\rangle =\langle e_n, e_m^*\rangle =\delta_{n,m}\,,\\
     &\langle z_m^*,z_n\rangle=\langle z_n,z_m^*\rangle=\delta_{n,m}\,,\\
      & \langle d_m^*,Z d_n \rangle =\langle  d_m, Z^\top d_n^*\rangle = \delta_{n,m}\,.
\end{align}
\end{pr}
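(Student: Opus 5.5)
The strategy is to evaluate each pairing $\langle u^*,v\rangle=\frac{1}{2\pi i}\oint_\Gamma u^*(x)v(x)\,dx$ as the residue at $x=0$ of the Laurent product, i.e. as the coefficient of $x^{-1}$. Since every $v\in\sV_N$ is a polynomial of degree at most $N$, we may replace each dual function by any representative in its class modulo $\{x^{-n-1}:n>N\}\cup\{x^k:k\ge0\}$ (Remark on the quotient space). Whichever representative (finite sum or ${}_2F_1$ closed form) makes the residue cleanest will be used. The symmetric statements $\langle f_n,f_m^*\rangle=\langle f_m^*,f_n\rangle$, and similarly for $e$ and $z$, are immediate because the pairing is symmetric. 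The last relation, $\langle d_m,Z^\top d_n^*\rangle=\langle Z d_m,d_n^*\rangle$, follows from the definition of $Z^\top$ as the transpose with respect to this pairing. So it suffices to prove one relation per family.

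\textbf{The $z$ family.} Take the representative $z_m^*\equiv \frac{(-1)^m}{(-N)_m}x^{-m-1}(1-x)^{m-1-N}$. Then
\[
z_m^*z_n=c\,x^{n-m-1}(1-x)^{m-n-1}, \qquad c=\frac{(-1)^{m+n}(-N)_n}{(-N)_m}.
\]
For $n>m$ this product is holomorphic at $0$, so the residue vanishes. For $n=m$ the residue is the coefficient of $x^{-1}$ in $c\,x^{-1}(1-x)^{-1}$, namely $1$. For $n<m$ it equals the coefficient of $x^{m-n}$ in $(1-x)^{m-n-1}$, a polynomial of degree $m-n-1$, hence $0$.

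\textbf{The $e,f,d$ families.} Here we take the finite-sum representatives. The residue is a terminating double sum, and the sum over the monomial index $\ell$ collapses to a single hypergeometric sum. For $f$, $\langle f_m^*,f_n\rangle$ is the coefficient of $x^{m}$ in $x^n{}_2F_1(n-N,n-\beta-\rho;2n-2\alpha-\rho+1;x)$ times the polynomial part of $f_m^*$. This vanishes trivially when $n>m$. When $n\le m$ it becomes a terminating ${}_3F_2$ at unit argument in the index $k=m-n$, which we expect to be balanced and summable by Pfaff--Saalschütz, yielding $\delta_{nm}$ through a factor $(n-m)_{m-n}$-type vanishing. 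The $d$ pairing $\langle d_m^*,Zd_n\rangle$ is handled the same way, using the explicit ${}_2F_1$ for $Zd_n$ and a Chu--Vandermonde/Saalschütz evaluation. For $e$ the two functions are expanded oppositely: $e_m^*$ has powers $x^{-m-1-\ell}$ and $e_n$ is a polynomial. The residue is then $\sum_\ell$ of products of coefficients with $k=m+\ell$. It vanishes for $n<m$ by degree, and for $n\ge m$ it is a terminating ${}_3F_2(1)$, again expected to be Saalschütz balanced.

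\textbf{Main obstacle.} The hard part is verifying that the $e$ and $f$ sums are genuinely balanced (Saalschützian) and that the resulting Pochhammer quotient vanishes exactly for $n\ne m$. If balance fails, the fallback is the Euler transformation applied to the ${}_2F_1$ representatives before extracting coefficients. A last-resort fallback is the algebraic argument: $\langle f_m^*,(X+\rho Z)f_n\rangle$ computed two ways gives orthogonality for distinct eigenvalues, with only the diagonal normalisation then needing a residue check, namely the leading coefficient product.
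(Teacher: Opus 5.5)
Your route is the paper's: read each pairing as the coefficient of $x^{-1}$, let the off-diagonal cases on one side vanish by degree, get the diagonal from extreme coefficients, and reduce the rest to a terminating sum. Your $z$ case is complete and correct. The closed form $c\,x^{n-m-1}(1-x)^{m-n-1}$ is neater than the Leibniz argument the paper calls analogous and omits.

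For $e$, $f$, $d$, however, you stop at an expectation, and the mechanism you predict is not what occurs. Take $f$ with $s=m-n\ge 1$. Reverse the coefficients of $f_m^*$ with $(a)_{s-k}=(-1)^k(a)_s/(1-a-s)_k$:
\begin{itemize}
\item $(1+N-m)_{s-k}$ and $(\beta+\rho+1-m)_{s-k}$ produce $1/(n-N)_k$ and $1/(n-\beta-\rho)_k$. These cancel exactly the numerator parameters of the ${}_2F_1$ in $f_n$.
\item $(1+2\alpha+\rho-2m)_{s-k}$ and $(s-k)!$ contribute $(2n-2\alpha-\rho+s)_k$ and $(-s)_k$.
\end{itemize}
The coefficient of $x^s$ is therefore a constant times
\[
{}_2F_1\!\left(-s,\,c+s-1;\,c;\,1\right)=\frac{(1-s)_s}{(c)_s}=0,\qquad c=2n-2\alpha-\rho+1,
\]
by Chu--Vandermonde. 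This is exactly the paper's computation.

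In the $d$ case the same reversal kills $(n-N)_k$ and the denominator $(n-\alpha)_k$ of the ${}_2F_1$ in $Zd_n$, leaving ${}_2F_1(-s,\alpha-\beta-1;\alpha-\beta-s;1)$. In the $e$ case, with $s=n-m$, the identities $(a)_{m+j}=(a)_m(a+m)_j$ and $(m+j)!=m!\,(m+1)_j$ cancel two pairs and leave ${}_2F_1(m-n,\,n+m-2\beta-2\zeta-1;\,2m-2\beta-2\zeta;\,1)$.

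All three sums have the form ${}_2F_1(-s,c+s-1;c;1)$. So the evaluation is Chu--Vandermonde, not a genuinely Saalsch\"utzian ${}_3F_2$. The vanishing factor is $(1-s)_s$, that is $(1+n-m)_{m-n}$ or $(1+m-n)_{n-m}$. The factor you named, $(n-m)_{m-n}=(-1)^{m-n}(m-n)!$, is nonzero and would not give orthogonality.

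Your fallback is sound. For $f$, $\theta_m\langle f_m^*,f_n\rangle=\langle (X^\top+\rho Z^\top)f_m^*,f_n\rangle=\langle f_m^*,(X+\rho Z)f_n\rangle=\theta_n\langle f_m^*,f_n\rangle$, with $\theta_m-\theta_n=(m-n)(2\alpha+\rho-m-n)$. The same works with $V$ for $e$, which needs $m+n\neq 2\beta+2\zeta+1$, and with $X-\lambda Z$ for $d$, which is automatic since $\lambda_n=\alpha-n$. Adding the extreme-coefficient check on the diagonal, your proposal does contain a complete proof for generic parameters. That argument, though, is just the abstract orthogonality of Section 4 transported through the identification of $\ket{n}$ with $g_n$ and of the dual standard basis with $g_n^*$. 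The paper explicitly sets it aside in favour of the direct residue evaluation, and that evaluation needs the Chu--Vandermonde step above.
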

\begin{proof}
We begin by establishing the relation $\langle f_m^*, f_n\rangle =\delta_{n,m}$. To this end, introduce the function
\begin{align}
h_{m,n}(x)=\hg{2}{1}\argu{1+\rho+\beta-m,1+N-m}{1+2\alpha+\rho-2m}{x}\hg{2}{1}\argu{n-N, n-\beta-\rho}{2n-2\alpha-\rho+1}{x}\,.
\end{align}
By definition of $f$ and $f^*$ and by Cauchy theorem we have
\begin{align}
\label{eq:model1-ortf}
\langle f_m^*, f_n\rangle &=\frac{\epsilon_m^*\epsilon_n}{2\pi i}\oint _\Gamma \frac{h_{m,n}(x)}{x^{m-n+1}}dx=\begin{cases}1\,\,\,\hspace{1.9cm}m=n\\
(-1)^{n+m}\frac{(-N)_n}{(-N)_m}\frac{h^{(m-n)}_{m,n}(0)}{(m-n)!}\qquad m>n\\
0\hspace{2.6cm} m<n
\end{cases}
\end{align} 
It therefore remains to show that $h^{(s)}_{n+s,n}(0)=0$ for all $s>0$. Using Leibniz’s rule together with the identity $(a)_s=(-1)^k(a)_{s-k}(1-a-s)_k$
and applying the Chu–Vandermonde identity, we compute 
\begin{align}
h^{(s)}_{n+s,n}(0)&=\sum_{k=0}^s\binom{s}{k}\frac{(1+\rho+\beta-n-s)_{s-k}(1+N-n-s)_{s-k}(n-N)_k(n-\beta-\rho)_k}{(1-2n-2s+2\alpha+\rho)_{s-k}(2n-2\alpha-\rho+1)_k}\\
&=\frac{(1+\rho+\beta-n-s)_s(1+N-n-s)_s}{(1-2n-2s-2\alpha+\rho)_s}\sum_{k=0}^s \frac{(-s,2n+2\alpha-\rho+s)_k}{(2n-2\alpha-\rho+1)_k}\nonumber\\
&=\frac{(1+\rho+\beta-n-s)_s(1+N-n-s)_s}{(1-2n-2s-2\alpha+\rho)_s} \hg{2}{1}\argu{-s,2n-2\alpha-\rho+s}{2n-2\alpha-\rho+1}{1}\nonumber\\
&=\frac{(1+\rho+\beta-n-s)_s(1+N-n-s)_s}{(1-2n-2s-2\alpha+\rho)_s}  \frac{(1-s)_s}{(2n-2\alpha-\rho+1)_s}=0\,\nonumber.
\end{align}
The identities $\langle z_m^*,z_n\rangle=\delta_{n,m}$ and $\langle d_m^*, Z d_n\rangle =\delta_{n,m}$ follow by an analogous argument and are therefore omitted.
The Laurent expansion around $x=0$ of the product $e_m^*(x)e_n(x)$ is
\begin{equation}
e_m^*(x)e_n(x)=\frac{(-N,N-2\alpha-\beta-2\zeta)_n(-1)^m}{(n-2\beta-2\zeta-1)_n(-N)_m}\sum_{j=0}^\infty
\sum_{k=0}^n
A_{n,k}B_{m,j}\,
x^{k-m-1-j}\,,
\end{equation}
with 
\begin{equation}
    A_{n,k}
=
\frac{(-n)_k(n-2\beta-2\zeta-1)_k}
{(N-2\alpha-\beta-2\zeta)_k\,k!}\,,\qquad B_{m,j}
=
\frac{(m+1)_j(N+m-2\alpha-\beta-2\zeta)_j}
{(2m-2\beta-2\zeta)_j\,j!}\,.
\end{equation}
Thus, the only terms that contribute to the integral are those  satisfying $
k-m-1-j=-1$, \textit{i.e.}, $
k=m+j.$ 
If $m>n$, then $m+j>n$ for all $j\ge0$, and hence the condition
$k\le n$ cannot be satisfied.
Therefore no term of order $x^{-1}$ appears in the Laurent expansion, and $\langle e_n^*,e_m\rangle=0$ if $m>n$. In the case $m<n$, the condition $k=m+j\le n$ allows $0\le j\le n-m,$.
The coefficient of $x^{-1}$ is then given by
\begin{align}
\frac{(-N,N-2\alpha-\beta-2\zeta)_n(-1)^m}{(n-2\beta-2\zeta-1)_n(-N)_m}\sum_{j=0}^{n-m}
\frac{(-n)_{m+j}(n-2\beta-2\zeta-1)_{m+j}}
{(N-2\alpha-\beta-2\zeta)_{m+j}(m+j)!}
\,
\frac{(m+1)_j(N+m-2\alpha-\beta-2\zeta)_j}
{(2m-2\beta-2\zeta)_j\,j!}.
\end{align}
Using the identities $(a)_{m+j}=(a)_n\,(a+m)_j,$  $(k+m)!=(m+1)_k m!$, and the Chu-Vandermonde identity, 
the sum becomes 
\begin{align}
&\frac{(-N,N-2\alpha-\beta-2\zeta)_n(-1)^m}{(n-2\beta-2\zeta-1)_n(-N)_m}\frac{(-n)_m(n-2\beta-2\zeta-1)_m}{(N-2\alpha-\beta-2\zeta)_m m!}\hg{2}{1}\argu{m-n,n+m-2\beta-2\zeta-1}{2m-2\beta-2\zeta}{1}\nonumber\\
&=\frac{(-N,N-2\alpha-\beta-2\zeta)_n(-1)^m}{(n-2\beta-2\zeta-1)_n(-N)_m}\frac{(-n)_m(n-2\beta-2\zeta-1)_m}{(N-2\alpha-\beta-2\zeta)_m m!} \frac{(1-n+m)_{n-m}}{(2m-2\beta-2\zeta)_{n-m}}=0\,,
\end{align}
and we conclude $\langle e_n^*,e_m \rangle=0 $ if $m<n.$
In the case $n=m$, the only term contributing to the term of order $x^{-1}$ is $j=0$, $k=m$, and the coefficient of $x^{-1}$ is equal to $1.$ We conclude $\langle e_m^*,e_n\rangle=\delta_{n,m}$.
\end{proof}
We end this section with integral representations of the Racah polynomials and the rational functions of Racah type. 
\begin{cor}
The Racah polynomials $S_m(n)=\langle f_n^*,e_m\rangle$, the rational functions of Racah type 
$U_m(n)=\langle e_m,d_n^*\rangle$, and the dual Hahn polynomials
\begin{equation}
    R_k^{(dH)}(m;\boldsymbol\rho)
=\frac{k!(m-2\beta-2\zeta-1)_m}{(-N,N-2\alpha-\beta-2\zeta)_m}
\langle e_m,z_k^*\rangle\,,
\end{equation}
admit the following contour integral representations:
\begin{align}
&S_m(n)=\frac{(-1)^n}{2\pi i}\frac{m!(-N)_m}{(-N)_n(m-2\beta-2\zeta-1)_m}\oint _\Gamma x^{-n-1}J_m^{(a,b)}(x)\hg{2}{1}\argu{1+\beta+\rho-n, 1+N-n}{1+2\alpha+\rho-2n}{x} dx\,,\\
&U_m(n)=\frac{(-1)^n}{2\pi i}\frac{m!(-N)_m}{(-N)_n(n-\alpha)(m-2\beta-2\zeta-1)_m}\oint _\Gamma   x^{-n-1}J_m^{(a,b)}(x)\hg{2}{1}\argu{N+1-n,\beta-\alpha+1}{\alpha-n+1}{x}dx\,,\\
&R_k^{(dH)}(m;\boldsymbol\rho)=\frac{(-1)^k}{2\pi i}\frac{m! k!}{(N-2\alpha-\beta-2\zeta)_m(-N)_k}\oint_{\Gamma} x^{-k-1}(1-x)^{k-1-N}J_m^{(a,b)}(x)dx\,.
\end{align}
Here $J_m^{(a,b)}(x)$ denotes the Jacobi polynomial with parameters $ a=N-2\alpha-\beta-2\zeta-1$ and $b=2\alpha-\beta-N-1$ and $\boldsymbol\rho=(N-2\alpha-\beta-2\zeta-1,2\alpha-\beta-N-1,N)$.
\end{cor}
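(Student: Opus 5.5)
The plan is to use the contour-integral realization of the pairing, $\langle f,g\rangle=\frac{1}{2\pi i}\oint_\Gamma f(x)g(x)\,dx$, and to insert the explicit models of the bases obtained above. There are two ingredients. The first is the Jacobi form of $e_m(x)$:
\[
e_m(x)=\frac{m!(-N)_m}{(m-2\beta-2\zeta-1)_m}J_m^{(a,b)}(x).
\]
The second is the closed forms, valid on the quotient space of Laurent series, of the dual vectors $f_n^*$, $d_n^*$ and $z_k^*$ recorded in the remark preceding Proposition~\ref{eq:orth-model1}. The key point, stated in that remark, is that if $f\equiv g$ in the quotient then $\langle f,h\rangle=\langle g,h\rangle$ for every $h\in\sV_N$. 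Since $e_m\in\sV_N$, we may therefore replace each finite dual sum by the corresponding full ${}_2F_1$ (or by $(1-x)^{k-1-N}$) without changing the pairing.

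For $S_m(n)=\langle f_n^*,e_m\rangle$, substitute
\[
f_n^*(x)\equiv\frac{(-1)^n}{(-N)_n}x^{-n-1}\,\hg{2}{1}\argu{\beta+\rho+1-n,1+N-n}{1+2\alpha+\rho-2n}{x}
\]
together with the Jacobi expression for $e_m$. Collecting the constants gives exactly the stated prefactor $\frac{(-1)^n m!(-N)_m}{(-N)_n(m-2\beta-2\zeta-1)_m}$. For $U_m(n)=\langle e_m,d_n^*\rangle$, the pairing is symmetric, being the integral of a product, so we may insert
\[
d_n^*(x)\equiv\frac{(-1)^n}{(-N)_n(n-\alpha)}x^{-n-1}\,\hg{2}{1}\argu{\beta-\alpha+1,1+N-n}{\alpha-n+1}{x};
\]
this produces the extra factor $1/(n-\alpha)$ in the statement. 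For the dual Hahn case, we use $z_k^*(x)\equiv\frac{(-1)^k}{(-N)_k}x^{-k-1}(1-x)^{k-1-N}$ and multiply by the normalization $\frac{k!(m-2\beta-2\zeta-1)_m}{(-N,N-2\alpha-\beta-2\zeta)_m}$ coming from \eqref{eq:em-zk-dualhahn}. The factor $(m-2\beta-2\zeta-1)_m$ cancels and $(-N)_m$ cancels, leaving $\frac{(-1)^k m!k!}{(N-2\alpha-\beta-2\zeta)_m(-N)_k}$, as claimed.

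The one step needing care, and the main obstacle, is justifying the replacement of the finite sums by the infinite series inside the contour integral. The truncated sums for $d_n^*$, $f_n^*$, $z_k^*$ involve the powers $x^{\ell-n-1}$ with $0\le\ell\le n$. The full series add terms $x^{\ell-n-1}$ with $\ell>n$, that is, nonnegative powers of $x$. Multiplied by a polynomial $e_m(x)$, these contribute no $x^{-1}$ residue. The series converge uniformly on $\Gamma$ because $|x|<1$ there, so termwise integration is legitimate and those extra terms integrate to zero. This is precisely the equivalence $\equiv$ of Remark~\ref{rem:QuotientSpace}. One should also check that the lower denominators, such as $\alpha-n+1$ and $1+2\alpha+\rho-2n$, are not nonpositive integers; this is assumed generically throughout the paper. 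Everything else is bookkeeping of the normalization constants.
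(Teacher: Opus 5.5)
Your proposal is correct and is essentially the argument the paper intends. The paper states the corollary without a separate proof, as a direct consequence of three ingredients it has just recorded: the Jacobi form of $e_m$, the quotient-space forms of $f_n^*$, $d_n^*$, $z_k^*$, and the remark that $f\equiv g$ implies $\langle f,h\rangle=\langle g,h\rangle$ for $h\in\sV_N$. Your bookkeeping of the constants checks out.

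One small caution concerns your truncation argument for $z_k^*$. The finite sum printed in the paper for $z_n^*$ carries a spurious factor $(-1)^\ell$. Recomputing it from \eqref{eq:expression-zn-ast} gives $\frac{(-1)^k}{(-N)_k}x^{-k-1}\sum_{\ell=0}^{k}\frac{(1+N-k)_\ell}{\ell!}x^\ell$, which is indeed the head of the binomial series of $(1-x)^{k-1-N}$, as your argument requires.
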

\section{Conclusion}
The program of extending the Askey scheme to biorthogonal rational functions through the introduction of meta algebras and their representations, initiated in \cite{tsujimoto2024meta} with the Hahn case, and continued in \cite{bernard2024meta} with the $q$-Hahn case, was pursued in the present paper by dealing with the Racah case. Following the general approach of \cite{tsujimoto2024meta}, the meta Racah algebra with three generators $X,V, Z$
 was defined abstractly, and a representation on a finite-dimensional vector space where all three generators act in a bidiagonal fashion was obtained. This bidiagonal representation allowed us to explicitly solve the relevant generalized and ordinary eigenvalue problems involving the operators 
$X,V,Z$ or their transpositions. The representations of the meta Racah algebra in these eigenbases were characterized. The overlap coefficients between solutions of different ordinary eigenvalue problems were identified with Racah polynomials, while those involving generalized eigenvalue problems gave rise to Racah type rational functions. Their orthogonality, biorthogonality, and bispectral properties were shown to follow directly from the algebraic relations and the representation theory of the meta Racah algebra.
It was found that the operators $V$ and $Z$ form the Hahn algebra, and that 
$X$ is an algebraic Heun operator associated with the Leonard pair $(V,Z).$
Furthermore, the construction developed was seen to relate to the Leonard trios formalism. All this extends therefore the correspondence between Leonard pairs and the terminating families in the Askey scheme to one that includes biorthogonal rational functions.

These results open several directions for further research. A natural next step is the study of infinite dimensional representations of meta algebras, which are expected to provide an algebraic framework for infinite families of orthogonal polynomials and biorthogonal rational functions. Another promising direction concerns multivariate generalizations, where meta algebras may offer a systematic approach to the construction of multivariate bispectral rational functions. Finally, it would be particularly interesting to apply the meta algebraic approach to the Bannai–Ito scheme \cite{genest2014superintegrability,genest2014racah,tsujimoto2012dunkl,genest2016non,genest2014bannai,de2015bannai,tsujimoto2013dual}. Given the role of reflection operators and symmetries in this setting, suitable meta extensions of the corresponding algebras could provide new algebraic interpretation of the Bannai–Ito polynomials and their rational analogues, further illuminating their position within a broader bispectral scope.

Further directions include the study of meta algebras associated with Wilson rational functions and elliptic biorthogonal rational functions; in particular, an open problem is to clarify the relation between meta algebras of Askey–Wilson type and elliptic Sklyanin algebras, given that the most general explicitly known elliptic biorthogonal rational functions are related to elliptic quadratic Sklyanin algebras, as observed by Rains and Rosengren \cite{Rosengren}.
\vspace{1cm}
\paragraph{\textbf{Acknowledgements:}}
The research of ST is supported by JSPS KAKENHI (Grant Number 24K00528). 
L.~Vinet is funded in part by a Discovery Grant from the Natural Sciences and Engineering Research Council (NSERC) of Canada. Q.~Labriet and L.~Morey enjoy postdoctoral fellowships provided by this grant. The work of A.~Zhedanov was performed at the Saint Petersburg Leonhard Euler International Mathematical Institute and supported by the Ministry of Science and Higher Education of the Russian Federation (agreement no. 075–15–2025–343).  

\bibliographystyle{utphys}
\bibliography{metaracah.bib}

\end{document}